\theoremstyle{plain}
\newtheorem{thm}{Theorem}[section]
  \theoremstyle{definition}
  \newtheorem{defn}[thm]{Definition}
  \theoremstyle{plain}
  \newtheorem{prop}[thm]{Proposition}
  \theoremstyle{remark}
  \newtheorem{rem}[thm]{Remark}
 \theoremstyle{definition}
  \newtheorem{example}[thm]{Example}
  \theoremstyle{plain}
  \newtheorem{lem}[thm]{Lemma}
\begin{document}

\title{Some results on condition numbers in convex multiobjective optimization}

\author{M. Bianchi%
\thanks{Dipartimento di Discipline Matematiche, Finanza Matematica ed Econometria,
Universit{à} Cattolica del Sacro Cuore di Milano, Via Necchi 9,
20123 Milano, Italy.%
} \and E. Miglierina%
\thanks{Dipartimento di Discipline Matematiche, Finanza Matematica ed Econometria,
Universit{à} Cattolica del Sacro Cuore di Milano, Via Necchi 9,
20123 Milano, Italy. %
} \and E. Molho%
\thanks{Dipartimento di Economia Politica e Metodi Quantitativi, Università
degli Studi di Pavia, via S. Felice 5, 27100 Pavia, Italy. %
} \and R.Pini%
\thanks{Dipartimento di Statistica, Universit{à} degli Studi di Milano Bicocca,
Via Bicocca degli Arcimboldi 8, 20126 Milano, Italy. %
} }

\date{~}
\maketitle
\begin{abstract}
\noindent Various notions of condition numbers are used to study some
sensitivity aspects of scalar optimization problems. The aim of this
paper is to introduce a notion of condition number to study the case
of a multiobjective optimization problem defined via $m$ convex $C^{1,1}$
objective functions on a given closed ball in $\mathbb{R}^{n}.$ Two
approaches are proposed: the first one adopts a local point of view
around a given solution point, whereas the second one considers the
solution set as a whole. A comparison between the two notions of well-conditioned
problem is developed. We underline that both the condition numbers
introduced in the present work reduce to the same of condition number
proposed by Zolezzi in 2003, in the special case of the scalar optimization
problem considered there. A pseudodistance between functions is defined
such that the condition number provides an upper bound on how far
from a well--conditioned function $f$ a perturbed function $g$ can
be chosen in order that $g$ is well--conditioned too. For both the
local and the global approach an extension of classical Eckart--Young
distance theorem is proved, even if only a special class of perturbations
is considered. \end{abstract}
\begin{quote}
\textbf{\small Key words.}{\small {} Condition number; Eckart--Young
theorem; sensitivity analysis; multiobjective optimization.}{\small \par}

\textbf{\small Mathematics subject classification.}{\small {} 49K40,
90C31, 90C29 }{\small \par}
\end{quote}
\textbf{\small Corresponding Author:}{\small \par}

{\small Elena Molho}{\small \par}

{\small Dipartimento di Economia Politica e Metodi Quantitativi, Università
degli Studi di Pavia, via S. Felice 5, 27100 Pavia, Italy.}{\small \par}

tel. +39-0382986233 - fax +39-0382304226

{\small E-mail: }\texttt{\small molhoe@eco.unipv.it}{\small \par}

\pagebreak{}

\section{Introduction }

The goal of the analysis in parametric optimization is to study how
a change in the data of the problem, represented by a vector of parameters,
affects the solution of the given problem. A first distinction is
usually made between a qualitative and a quantitative approach to
the post--optimal analysis: a qualitative approach concerns the continuity
properties of the optimal solution map, while a quantitative approach
usually involves the evaluation of some kind of derivative of the
optimal solution map. The Lipschitzian behaviour of such a map is
usually considered as a part of the sensitivity analysis and it is
deeply related to the condition number theory. Besides evaluating
bounds on the change in the solutions due to perturbations, condition
numbers provide an estimation on how large a perturbation can be without
disrupting the regular behaviour of the solution map.

The first attempts to consider the numerical implications of Lipschitzian
stability properties and of their reformulations in terms of metric
regularity were developed in the 70s in the pioneering works by Robinson
on generalized equations (see e.g. \cite{Robinson80} and the references
therein). Later Renegar related the distance from ill--posedness to
a notion of condition number for linear programming, thus generalizing
the well--known Eckart--Young theorem of numerical linear algebra.
The condition number introduced in \cite{Renegar1,Renegar2} proved
also to be a fundamental tool in the analysis of the rate of convergence
of interior point methods in linear programming. An extension of the
distance theorem to convex processes was proved in \cite{Lewis1999}.

A few years ago, Zolezzi studied condition numbers in the setting
of quadratic optimization \cite{zolezzi02} and subsequently, more
in general, in problems where a differentiable objective function
with a Lipschitz continuous gradient is minimized on a ball of a Banach
space. The appropriate notion of condition number depends on the class
of perturbations of the given problem which preserve solvability.
In \cite{Zol03} perturbations by linear continuous functions (the
so--called tilt perturbations) are considered, where each perturbed
problem has exactly one solution.

Even if condition numbers are widely used both as tools for sensitivity
analysis and as instruments that may give precious insights on the
numerical aspects of optimization, to our knowledge conditioning techniques
have not yet been developed in the field of vector optimization.

The aim of the present work is a first attempt to define an appropriate
notion of condition number for multiobjective optimization problems.
We limit our study to the case of $m$ convex differentiable objective
functions with Lipschitz continuous gradients and we consider the
weakly Pareto efficient solution map on a ball in $\mathbb{R}^{n}$
under componentwise uniform tilt perturbations, in the sense that
each component of the objective function will receive the same tilt
perturbation. We avoid any requirement of uniqueness of the solution,
since it is unduly restrictive in the setting of vector optimization.
We will consider two different approaches. The first one, the pointwise
approach, will focus on a given weakly efficient solution $\bar{x}.$
The pointwise condition number is defined as the Lipschitz modulus
of the weakly efficient solution map at $\bar{x}$ for $p=0,$ where
the value $p=0$ characterizes the unperturbed problem.

On the other hand, one can follow a global approach where the whole
weakly efficient solution set on a given ball is considered under
tilt perturbations. The global condition number is built as a direct
sensitivity measure on the weakly efficient solution map of the perturbed
problems. Moreover we show that the local and the global definitions
of condition numbers introduced here allow us to build a consistent
framework. Indeed, the global condition number for a given multiobjective
optimization problem is finite if and only if the local one is finite
at every weakly efficient solution of the same problem.

In both cases a distance theorem is proved. A pseudodistance between
functions is defined such that the condition number provides an upper
bound on how far from a well--conditioned function $f$ a perturbed
function $g$ can be chosen in order that $g$ is well--conditioned
too.

The two notions of condition number considered in the present work,
either following the pointwise approach or the global one, coincide
with the condition number $c_{2}$ proposed in \cite{Zol03} in the
special scalar optimization problem discussed there. They can both
be considered as tools to extend the classical Eckart--Young distance
theorem to the setting of multiobjective optimization, even if only
a special class of perturbations is considered.

It is interesting to remark that in both approaches strong convexity
plays the role of a sufficient condition to obtain well--conditioning.

The paper is organized in five section. Section 2 contains some notations
and preliminaries. In Sections 3 and 4 our main results are stated
and proved. Indeed, in Section 3 we introduce a condition number for
a multiobjective optimization problem, following a local point of
view, whereas Section 4 is devoted to the study of a notion of condition
number adopting a global point of view. Moreover, in both these sections,
we prove an Eckart--Young type theorem related, respectively, to the
local and the global condition number.

\section{Notations and preliminaries}

In order to study the behaviour of the set--valued solution maps of
a multiobjective optimization problem, we need to consider some classical
notions of continuity and Lipschitz continuity for set--valued maps.
For a detailed exposition see, e.g., \cite{DonRock09}, or \cite{BM06}.

A set--valued map $T:A\subset\mathbb{R}^{n}\rightrightarrows\mathbb{R}^{m}$
is \textit{upper semicontinuous} at $x\in A$ if for every open set
$\mathcal{V}(T(x))$ such that $T(x)\subseteq\mathcal{V}(T(x))$ there
exists a neighborhood $\mathcal{U}(x)$ of $x$ such that for every
$x'\in\mathcal{U}(x),$ $T(x')\subset\mathcal{V}(T(x)).$

If $T:A\subset\mathbb{R}^{n}\rightrightarrows\mathbb{R}^{m}$ is a
closed--valued map with compact values at $x\in A,$ then $T$ is
upper semicontinuous at $x$ if and only if its graph is closed at
$x$, i.e., for each sequence $\{x_{n}\}\subset A$, $x_{n}\to x$,
and each $y_{n}\in T(x_{n})$, $y_{n}\to y,$ then $y\in T(x)$.

Lipschitzian properties of maps play a crucial role in many aspects
of variational analysis. We consider here some well--known generalizations
of the notion of Lipschitz continuity to set--valued maps that will
allow us to build a meaningful notion of condition number for multiobjective
optimization.

First let us recall the notion of Hausdorff distance between two sets.
Let $A,B\subset\mathbb{R}^{n}.$ The Hausdorff distance $d_{H}$ between
$A$ and $B$ is defined by\[
d_{H}(A,B)=\max\left\{ e(A,B),e(B,A)\right\} ,\]
 where $e(A,B)=\sup_{a\in A}d(a,B)=\sup_{a\in A}\inf_{b\in B}\left\Vert a-b\right\Vert $
is the excess functional; in particular, $e(\emptyset,A)=0,\, e(\emptyset,\emptyset)=0$
and $e(A,\emptyset)=+\infty.$

In the sequel, we will denote by $B(x,r)$ the closed ball in $\mathbb{R}^{n}$
centered at $x$ and with radius $r.$
\begin{defn}
A set--valued map $T:\mathbb{R}^{n}\rightrightarrows\mathbb{R}^{m}$
is \emph{Lipschitz continuous relative to a nonempty set $A\subset\mathbb{R}^{n}$}
if $A\subset\mathrm{dom}(T),$ $T$ is closed--valued on $A,$ and
there exists a Lipschitz constant $k\ge0$ such that \begin{equation}
T(x')\subset T(x)+k\|x'-x\|B(0,1)\quad\forall x,x'\in A.\label{eq:lipschitz}\end{equation}
 The infimum of the Lipschitz constants on A is called the \emph{Lipschitz
modulus of $T$ on $A$} and it is denoted by $\mathrm{lip}\left(T;A\right)$.

If $k<1$, the set--valued map $T$ is said to be a \emph{contraction}
of constant $k.$
\end{defn}
An equivalent reformulation of condition \eqref{eq:lipschitz} can
be given in terms of the Hausdorff distance as follows: \[
d_{H}(T(x'),T(x))\le k\|x'-x\|\quad\forall x,x'\in A.\]

A local version of Lipschitz continuity of a map is the so called
\emph{Aubin property}.
\begin{defn}
A set--valued map $T:\mathbb{R}^{n}\rightrightarrows\mathbb{R}^{m}$
is said to have the \emph{Aubin property at $\bar{x}$ for $\bar{y}\in T(\bar{x})$}
if there exist $k\ge0,$ a neighborhood $\mathcal{U}(\bar{x})$ of
$\bar{x}$ and a neighborhood $\mathcal{V}(\bar{y})$ of $\bar{y}$
such that \begin{equation}
T(x')\cap\mathcal{V}(\bar{y})\subset T(x)+k\|x-x'\|B(0,1),\qquad\forall x,x'\in\mathcal{U}(\bar{x}).\label{Aubin}\end{equation}
 The infimum of $k$ such that (\ref{Aubin}) holds is called the
\emph{Lipschitz modulus of $T$ at $\bar{x}$ for $\bar{y}$} and
is denoted by $\mathrm{lip}(T;\bar{x}|\bar{y}).$
\end{defn}
When $T$ is single--valued on $A,$ then the Lipschitz modulus of
$T$ on $A$ corresponds to the usual definition:\[
\mathrm{lip}(T;A)=\underset{x,x'\in A}{\mathrm{sup}}\frac{\left\Vert T(x)-T(x')\right\Vert }{\left\Vert x-x'\right\Vert },\]
 while the Lipschitz modulus of $T$ at $\bar{x}$ for $T(\bar{x})$
equals $\mathrm{lip}(T;\bar{x}),$ i.e., the usual Lipschitz modulus
at $\bar{x}:$ \[
\mathrm{lip}(T;\bar{x})=\limsup_{x,x'\to\bar{x},\, x\neq x'}\frac{\left\Vert T(x)-T(x')\right\Vert }{\left\Vert x-x'\right\Vert }.\]

Another crucial notion concerning set--valued maps is metric regularity,
that is related to the Lipschitzian properties of inverse maps.
\begin{defn}
Let $T:\mathbb{R}^{n}\rightrightarrows\mathbb{R}^{m}$ with $\mathrm{dom}(T)\neq\emptyset$
and let $(\bar{x},\bar{y})\in\mathrm{gph}\, T$. The set--valued map
$T$ is said to be \emph{metrically regular at $\bar{x}$ for $\bar{y}\in T(\bar{x})$}
with modulus $\mu\ge0$ if there exist a neighborhood $\mathcal{U}(\bar{x})$
of $\bar{x}$ and a neighborhood $\mathcal{V}(\bar{y})$ of $\bar{y}$
such that \begin{equation}
d(x,T^{-1}(y))\leq\mu d(y,T(x))\label{eq:reg}\end{equation}
 for all $x\in\mathcal{U}(\bar{x})$ and for all $y\in\mathcal{V}(\bar{y}).$
The infimum of $\mu$ such that (\ref{eq:reg}) holds for some $\mathcal{U}(\bar{x})$
and $\mathcal{V}(\bar{y})$ is called the \emph{regularity modulus
for $T$ at $\bar{x}$ for $\bar{y}\in T(\bar{x}),$} and is denoted
by $\mathrm{reg}(T;\bar{x}|\bar{y}).$
\end{defn}
A fundamental characterization of the metric regularity of a given
map is the following (see, e.g., Theorem 3E.6 in \cite{DonRock09}):
\begin{prop}
\label{pro Aubin reg} Let $(\bar{x},\bar{y})\in\mathrm{gph}\, T,$
where $T:\mathbb{R}^{n}\rightrightarrows\mathbb{R}^{m}.$ Then $T$
satisfies the Aubin property at $\bar{x}$ for $\bar{y}$ if and only
if its inverse $T^{-1}:\mathbb{R}^{m}\rightrightarrows\mathbb{R}^{n}$
is metrically regular at $\bar{y}$ for $\bar{x}$ with the same modulus,
i.e., $\mathrm{lip}(T;\bar{x}|\bar{y})=\mathrm{reg}(T^{-1};\bar{y}|\bar{x}).$
\end{prop}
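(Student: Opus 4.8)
The statement to prove is Proposition \ref{pro Aubin reg}, which asserts the equivalence between the Aubin property of $T$ at $\bar{x}$ for $\bar{y}$ and the metric regularity of $T^{-1}$ at $\bar{y}$ for $\bar{x}$, together with the equality of the associated moduli $\mathrm{lip}(T;\bar{x}|\bar{y})=\mathrm{reg}(T^{-1};\bar{y}|\bar{x})$. This is a classical duality result, and it is explicitly cited from Theorem 3E.6 in \cite{DonRock09}. Since the proposition is quoted as a known characterization, I would present the proof as a direct verification that the two defining inequalities are formally interchangeable once one swaps the roles of domain and range.

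The plan is to start from the observation that $(T^{-1})^{-1}=T$ and that $y\in T(x)\iff x\in T^{-1}(y)$, so the graph of $T^{-1}$ is simply the graph of $T$ with coordinates transposed, and the point $(\bar{y},\bar{x})\in\mathrm{gph}\,T^{-1}$. First I would write out the metric regularity inequality \eqref{eq:reg} for the map $S:=T^{-1}$ at $\bar{y}$ for $\bar{x}$: there exist neighborhoods $\mathcal{V}(\bar{y})$ and $\mathcal{U}(\bar{x})$ such that $d(y,S^{-1}(x))\le\mu\,d(x,S(y))$ for all $y\in\mathcal{V}(\bar{y})$, $x\in\mathcal{U}(\bar{x})$. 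Using $S^{-1}=T$ and $S=T^{-1}$, this reads $d(y,T(x))\le\mu\,d(x,T^{-1}(y))$. Next I would show this is equivalent to the Aubin inequality \eqref{Aubin} for $T$ by translating distance-to-set statements into set inclusions: the estimate $d(y,T(x))\le\mu\,d(x,T^{-1}(y))$ localized on the neighborhoods is exactly the quantitative content of the covering inclusion $T(x')\cap\mathcal{V}(\bar{y})\subset T(x)+k\|x-x'\|B(0,1)$ with $k=\mu$.

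The key technical step, and the one I expect to require the most care, is the passage between the metric-regularity formulation (phrased via point-to-set distances and inverse images) and the Aubin formulation (phrased via set inclusions and the excess functional). Concretely, given $x'\in\mathcal{U}(\bar{x})$ and $y\in T(x')\cap\mathcal{V}(\bar{y})$, one has $x'\in T^{-1}(y)$, so $d(x,T^{-1}(y))\le\|x-x'\|$; feeding this into the regularity bound applied at $(y,x)$ gives $d(y,T(x))\le\mu\|x-x'\|$, and since $y$ ranges over $T(x')\cap\mathcal{V}(\bar{y})$ this yields the desired inclusion with constant $\mu$. The converse direction runs the same computation backwards, choosing for each pair $(x,y)$ with $y\in\mathcal{V}(\bar{y})$ a near-optimal $x'\in T^{-1}(y)$ realizing $d(x,T^{-1}(y))$ up to arbitrary $\varepsilon$, and then applying the Aubin inclusion. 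The main obstacle is the careful bookkeeping of neighborhoods: one must verify that the neighborhoods supplied by one property can be shrunk to serve as the neighborhoods required by the other, so that no modulus is lost in either direction.

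Because the two inequalities hold with the \emph{same} constant $k=\mu$ on suitably matched neighborhoods, taking infima over all admissible constants on both sides gives $\mathrm{lip}(T;\bar{x}|\bar{y})\le\mathrm{reg}(T^{-1};\bar{y}|\bar{x})$ and the reverse inequality, hence the claimed equality. I would close by remarking that the argument is symmetric in $T$ and $T^{-1}$, which is why the moduli coincide exactly rather than merely being comparable up to a constant; since this is a standard result, I would cite \cite{DonRock09} for the full details rather than reproduce every $\varepsilon$-estimate.
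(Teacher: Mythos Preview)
The paper does not prove this proposition; it is stated without proof and attributed to Theorem~3E.6 in \cite{DonRock09}. Your sketch therefore goes beyond what the paper supplies, and the argument you outline is the standard one underlying the cited result: rewrite metric regularity of $T^{-1}$ at $(\bar{y},\bar{x})$ as the distance estimate $d(y,T(x))\le\mu\,d(x,T^{-1}(y))$ and match it against the Aubin inclusion via the obvious membership $x'\in T^{-1}(y)$ whenever $y\in T(x')$.

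One point worth flagging, since you identify the neighborhood bookkeeping as the main obstacle: in the direction ``Aubin $\Rightarrow$ metric regularity of $T^{-1}$'', the near-optimal $x'\in T^{-1}(y)$ you select need not lie in the neighborhood $\mathcal{U}(\bar{x})$ on which the Aubin inclusion is available, so the one-step argument you describe does not close immediately. The usual remedy (as in \cite{DonRock09}) is to shrink the neighborhoods so that the Aubin property first guarantees $T^{-1}(y)\cap\mathcal{U}(\bar{x})\neq\emptyset$ for $y$ close to $\bar{y}$, and then to compare with the nearest such $x'$; alternatively one invokes the equivalence through the graphical-derivative or Ekeland-type arguments given in the reference. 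Your acknowledgement that the $\varepsilon$-estimates should ultimately be delegated to \cite{DonRock09} is thus well placed, and matches exactly what the paper itself does.
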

In order to measure how far a metrically regular map $T$ can be perturbed
by a linear operator without loosing metric regularity, in \cite{DonLewRock2003}
Dontchev, Lewis and Rockafellar introduced the notion of radius of
metric regularity.
\begin{defn}
Let $T:\mathbb{R}^{n}\rightrightarrows\mathbb{R}^{m}$ with $\mathrm{dom}(T)\neq\textrm{Ø}$
and let $(\bar{x},\bar{y})\in\mathrm{gph}\, T$. The \emph{radius
of metric regularity of $T$ at $\bar{x}$ for $\bar{y}\in T(\bar{x})$}
is the value\[
\mathrm{rad}\left(T;\bar{x}|\bar{y}\right)=\underset{G\in\mathcal{L}(\mathbb{R}^{n},\mathbb{R}^{m})}{\mathrm{inf}}\left\{ \left\Vert G\right\Vert :T+G\,\mathrm{is\, not\, metrically\, regular\, at\,}\bar{x}\,\mathrm{for}\,(\bar{y}+G(\bar{x}))\right\} .\]

\end{defn}
In the Euclidean setting the radius $\mathrm{rad}(T;\bar{x}|\bar{y})$
defined above turns out to be meaningful also for a larger class of
perturbation maps, namely \[
\mathrm{rad}\left(T;\bar{x}|\bar{y}\right)=\underset{G\in\mathcal{G}}{\mathrm{min}}\left\{ \mathrm{lip}(G;\bar{x}):\, T+G\,\mathrm{is\, not\, metrically\, regular\, at\,}\bar{x}\,\mathrm{for}\,(\bar{y}+G(\bar{x}))\right\} \]
 where $\mathcal{G}$ is the collection of all the functions $G:\mathbb{R}^{n}\rightarrow\mathbb{R}^{m}$
that are Lipschitz continuous at $\bar{x}$.

The radius of metric regularity can be characterized as the reciprocal
of the regularity modulus. We recall that the local closedness of
a set at a point $x$ means that some ball $B(x,r)$ has closed intersection
with the set.
\begin{prop}
\label{radius}(see Theorem 1.5 in \cite{DonLewRock2003}). Let $T:\mathbb{R}^{n}\rightrightarrows\mathbb{R}^{m}$
with $\mathrm{dom}(T)\neq\textrm{Ø}$ and let $(\bar{x},\bar{y})\in\mathrm{gph}\, T$
be a point such that $\mathrm{gph}\, T$ is locally closed at $(\bar{x},\bar{y}).$
Then \[
\mathrm{rad}\left(T;\bar{x}|\bar{y}\right)=\frac{1}{\mathrm{reg}(T;\bar{x}|\bar{y})}.\]

\end{prop}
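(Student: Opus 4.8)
The statement is the Dontchev--Lewis--Rockafellar radius theorem, and the natural plan is to establish separately the two inequalities
\[
\mathrm{rad}(T;\bar x|\bar y)\geq\frac{1}{\mathrm{reg}(T;\bar x|\bar y)}
\qquad\text{and}\qquad
\mathrm{rad}(T;\bar x|\bar y)\leq\frac{1}{\mathrm{reg}(T;\bar x|\bar y)},
\]
writing $\mu:=\mathrm{reg}(T;\bar x|\bar y)$. If $T$ is not metrically regular at $\bar x$ for $\bar y$, then $\mu=+\infty$, $\mathrm{rad}(T;\bar x|\bar y)=0$ by definition, and the identity holds with the convention $1/(+\infty)=0$; hence I would assume $0<\mu<+\infty$ throughout.

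For the inequality with $\geq$, the guiding principle is that metric regularity is stable under perturbations that are small relative to $\mu$. I would invoke the Lyusternik--Graves perturbation theorem in the set--valued form available in \cite{DonRock09}: if $T$ is metrically regular at $\bar x$ for $\bar y$ with modulus $\mu$ and $G$ is Lipschitz near $\bar x$ with $\mathrm{lip}(G;\bar x)=\kappa$ satisfying $\kappa\mu<1$, then $T+G$ is metrically regular at $\bar x$ for $\bar y+G(\bar x)$ with $\mathrm{reg}(T+G;\bar x|\bar y+G(\bar x))\leq\mu/(1-\kappa\mu)$. Since a linear $G$ satisfies $\mathrm{lip}(G;\bar x)=\|G\|$, no perturbation with $\|G\|<1/\mu$ can destroy metric regularity, and comparing with the definition of the radius as an infimum over destabilizing $G$ yields $\mathrm{rad}(T;\bar x|\bar y)\geq 1/\mu$. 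The analytic core here is the contraction argument hidden inside the perturbation theorem, which I would cite rather than reprove.

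The reverse inequality is the substantive direction: I must construct, for each $\varepsilon>0$, a linear map $G$ with $\|G\|\leq 1/\mu+\varepsilon$ for which $T+G$ loses metric regularity at $\bar x$ for $\bar y+G(\bar x)$. The plan is to pass to the dual description furnished by the Mordukhovich coderivative criterion, applicable because $\mathrm{gph}\,T$ is locally closed at $(\bar x,\bar y)$: the map $T$ is metrically regular at $\bar x$ for $\bar y$ if and only if $0\in D^{*}T(\bar x|\bar y)(y^{*})$ forces $y^{*}=0$, and moreover
\[
\frac{1}{\mu}=\min\bigl\{\|x^{*}\|:\ x^{*}\in D^{*}T(\bar x|\bar y)(y^{*}),\ \|y^{*}\|=1\bigr\}.
\]
I would pick a near--minimizing pair $(y^{*},x^{*})$ with $\|y^{*}\|=1$, $x^{*}\in D^{*}T(\bar x|\bar y)(y^{*})$ and $\|x^{*}\|\leq 1/\mu+\varepsilon$, and then define the rank--one map $Gu=-\langle x^{*},u\rangle\,y^{*}$, so that $\|G\|=\|x^{*}\|\leq 1/\mu+\varepsilon$ and $G^{*}y^{*}=-x^{*}$. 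Using the exact coderivative sum rule for the smooth perturbation $G$, namely $D^{*}(T+G)(\bar x|\bar y+G(\bar x))(y^{*})=D^{*}T(\bar x|\bar y)(y^{*})+G^{*}y^{*}$, one finds $0\in D^{*}(T+G)(\bar x|\bar y+G(\bar x))(y^{*})$ with $y^{*}\neq 0$, so $T+G$ fails the criterion and is not metrically regular. Letting $\varepsilon\to 0$ gives $\mathrm{rad}(T;\bar x|\bar y)\leq 1/\mu$, and together with the previous step the equality follows.

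The hard part is clearly this second inequality, and within it the two genuinely nontrivial inputs are the identification of $1/\mu$ with the outer norm of the inverse coderivative, i.e. the coderivative formula for the regularity modulus, and the exactness of the coderivative sum rule for a linear perturbation; the rank--one construction and the norm bookkeeping are then routine, and a sanity check on the linear single--valued case $T=A$ recovers exactly the Eckart--Young distance $\sigma_{\min}(A)$ to loss of surjectivity. An alternative would be a purely primal construction of the destabilizing $G$ via approximate openness estimates, bypassing coderivatives, but in the present finite--dimensional setting the dual route is the cleanest. Throughout, local closedness of $\mathrm{gph}\,T$ at $(\bar x,\bar y)$ is the standing hypothesis that makes both the perturbation theorem and the coderivative criterion applicable.
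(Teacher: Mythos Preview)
The paper does not give its own proof of this proposition: it is quoted verbatim as Theorem~1.5 of \cite{DonLewRock2003} and used as a black box, so there is nothing in the paper to compare your argument against. Your sketch is a faithful outline of the original Dontchev--Lewis--Rockafellar proof---the Lyusternik--Graves perturbation estimate for the bound $\mathrm{rad}\geq 1/\mathrm{reg}$, and the Mordukhovich coderivative criterion together with a rank--one destabilizing map for $\mathrm{rad}\leq 1/\mathrm{reg}$---and is correct as such; the only minor omission is the degenerate case $\mathrm{reg}(T;\bar x|\bar y)=0$, where the identity is read as $\mathrm{rad}=+\infty$.
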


\subsection{Parametric multiobjective problems}

Now we introduce a parametric multiobjective optimization problem
that will be considered in the sequel in order to develop a suitable
notion of condition number. In the present work we restrict to the
special case of componentwise uniform tilt perturbations, in the sense
that we perturb all the components of the objective function with
the same linear term. Although some of the results obtained could
be stated in a more general setting, the main results require some
regularity of the data. This explain why we will confine ourselves
to a more restrictive framework.

We denote by $\mathcal{C}^{1,1}(B(0,r))$ the set of all vector--valued
functions $f:B(0,r)\subset\mathbb{R}^{n}\to\mathbb{R}^{m}$ such that
$f_{i},$ $i=1,2,...,m,$ is differentiable at each interior point
of $B(0,r),$ and whose gradient $\nabla f_{i}$ can be extended to
the closed ball $B(0,r)$ in such a way that it is Lipschitz continuous
on the whole set $B(0,r).$ Let $\mathcal{U}(0)$ be a neighborhood
of the origin in $\mathbb{R}^{n},$ and let $p\in\mathcal{U}(0).$
We denote by $f^{p}$ the perturbation of $f$ defined by \[
f^{p}(x)=f(x)-[p]\cdot x,\]
 where $[p]$ denotes a matrix with $m$ rows and $n$ columns such
that all the rows are equal to $p.$

Given a function $f\in\mathcal{C}^{1,1}(B(0,r))$ we consider the
following parametric multiobjective optimization problem

\begin{equation}
\min_{x\in B(0,r)}f^{p}(x),\tag{VO\ensuremath{_{p}}}\label{eq:VOP}\end{equation}
 where the ordering cone in $\mathbb{R}^{m}$ is given by the nonnegative
orthant \[
\mathbb{R}_{+}^{m}=\{x=(x_{1},x_{2},\dots,x_{m})\in\mathbb{R}^{m}:\, x_{i}\ge0,\, i=1,2,\dots,m\}.\]

We will denote by $WE_{f}$ the set--valued map $WE_{f}:\mathcal{U}(0)\subset\mathbb{R}^{n}\rightrightarrows\mathbb{R}^{n}$
of the weakly efficient solutions of problem {\eqref{eq:VOP}},
i.e., $WE_{f}(p)$ is the set of points $x\in B(0,r)$ such that \[
(f^{p}(x)-\mathrm{int}(\mathbb{R}_{+}^{m}))\cap f^{p}(B(0,r))=\emptyset.\]
 Since the set $f^{p}(B(0,r))$ is compact$,$ then the set $WE_{f}(p)\subseteq B(0,r)$
is nonempty and closed, for every $p\in\mathcal{U}(0)$ (see, for
instance, Theorem 6.5 in \cite{Jahn04}). Furthermore, the set--valued
map $WE_{f}$ turns out to be upper semicontinuous at $0$ (see Corollary
4.6, Ch. 4 in \cite{Luc}).

First order optimality conditions play a key role in the development
of a condition number theory. Indeed, in the present work the sensitivity
of the solution map $WE_{f}$ will be investigated by means of the
effect that a perturbation of the objective function produces on the
inclusion that is equivalent to the first order optimality conditions.

To any function $f\in\mathcal{C}^{1,1}(B(0,r))$, we associate the
set--valued map $H_{f}:B(0,r)\rightrightarrows\mathbb{R}^{n}$ defined
as \[
x\mapsto H_{f}(x)=\left\{ \sum_{i=1}^{m}\lambda_{i}\nabla f_{i}(x),\quad\sum_{i=1}^{m}\lambda_{i}=1,\lambda_{i}\ge0\right\} .\]
 It is worthwhile noticing that this map has nonempty, compact and
convex values; moreover, it can be easily proved that it has a closed
graph. Set $s_{f}(x)=d(0,H_{f}(x)).$

In the next proposition the Lipschitz continuity of the maps $H_{f}$
and $s_{f}$ is proved (for more details on $H_{f}(x)$ and $s_{f}(x)$
we refer to \cite{Mig04}).
\begin{prop}
\label{Lip H} Let $f\in\mathcal{C}^{1,1}(B(0,r))$. Then, the set--valued
map $H_{f}$ and the function $s_{f}$ are Lipschitz continuous on
$B(0,r)$ with the same Lipschitz constant given by $K=\max_{i=1,\dots,m}\mathrm{lip}(\nabla f_{i};B(0,r)).$ \end{prop}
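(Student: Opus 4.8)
The plan is to first identify $H_f(x)$ with the convex hull of the $m$ gradient vectors $\nabla f_1(x),\dots,\nabla f_m(x)$, and then to bound the Hausdorff distance between two such hulls by the largest displacement of a vertex. Concretely, writing $H_f(x)=\mathrm{conv}\{\nabla f_1(x),\dots,\nabla f_m(x)\}$, I would establish the elementary estimate
\[
d_H\bigl(\mathrm{conv}\{a_1,\dots,a_m\},\mathrm{conv}\{b_1,\dots,b_m\}\bigr)\le\max_{i=1,\dots,m}\|a_i-b_i\|
\]
valid for any two families of points. Applied with $a_i=\nabla f_i(x)$ and $b_i=\nabla f_i(x')$, and using the Lipschitz continuity of each $\nabla f_i$ with constant at most $K$, this immediately gives $d_H(H_f(x),H_f(x'))\le K\|x-x'\|$, that is, Lipschitz continuity of $H_f$ with constant $K$.

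To prove the displayed hull estimate, I would take any point $a=\sum_{i=1}^m\lambda_i a_i$ of the first hull, with $\lambda_i\ge0$ and $\sum_i\lambda_i=1$, and compare it with the point $b=\sum_{i=1}^m\lambda_i b_i$ of the second hull obtained by keeping the \emph{same} coefficients. Then
\[
\|a-b\|=\Bigl\|\sum_{i=1}^m\lambda_i(a_i-b_i)\Bigr\|\le\sum_{i=1}^m\lambda_i\|a_i-b_i\|\le\max_{i}\|a_i-b_i\|,
\]
so $d(a,\mathrm{conv}\{b_j\})\le\max_i\|a_i-b_i\|$; taking the supremum over $a$ bounds the excess $e(\mathrm{conv}\{a_j\},\mathrm{conv}\{b_j\})$, and the symmetric argument bounds the reverse excess, yielding the claim. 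The key point --- and the only genuinely substantive step --- is this matching of convex-combination coefficients, which transfers the vertex-wise bound to the whole polytope.

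Finally, for $s_f$ I would use that the distance to a nonempty set is $1$-Lipschitz with respect to the Hausdorff distance: for nonempty (here compact) sets $A,B$ one has $|d(0,A)-d(0,B)|\le d_H(A,B)$. Indeed, choosing $a^*\in A$ with $\|a^*\|=d(0,A)$ and $b\in B$ with $\|a^*-b\|=d(a^*,B)\le d_H(A,B)$ gives $d(0,B)\le\|b\|\le\|a^*\|+\|a^*-b\|\le d(0,A)+d_H(A,B)$, and symmetrically. Combining this with the bound already obtained for $H_f$ yields
\[
|s_f(x)-s_f(x')|=|d(0,H_f(x))-d(0,H_f(x'))|\le d_H(H_f(x),H_f(x'))\le K\|x-x'\|,
\]
so $s_f$ is Lipschitz with the same constant $K$. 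I expect no real difficulty beyond recording these two standard facts, since the compactness of the values $H_f(x)$ (noted before the statement) guarantees that the minima above are attained.
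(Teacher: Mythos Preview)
Your proposal is correct and follows essentially the same approach as the paper: both arguments match the convex-combination coefficients to transfer the vertex-wise Lipschitz bound $\|\nabla f_i(x)-\nabla f_i(x')\|\le K\|x-x'\|$ to the whole hull $H_f(x)$, and then deduce the Lipschitz continuity of $s_f$ from that of $H_f$. The only difference is cosmetic: the paper cites \cite{RockWets98} for the step $|d(0,A)-d(0,B)|\le d_H(A,B)$, whereas you spell it out directly.
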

\begin{proof}
Let $x,y\in B(0,r);$ then \[
\nabla f_{i}(x)-\nabla f_{i}(y)\in\mathrm{lip}(\nabla f_{i};B(0,r))\, B(0,\|x-y\|)\subseteq K\, B(0,\|x-y\|).\]
 By considering the convex hull of the first term, we obtain\[
\sum_{i=1}^{n}\lambda_{i}(\nabla f_{i}(x)-\nabla f_{i}(y))\in K\, B(0,\|x-y\|),\]
 whenever $\sum_{i=1}^{m}\lambda_{i}=1,$ $\lambda_{i}\ge0.$ This
implies that \[
H_{f}(x)\subseteq H_{f}(y)+K\, B(0,\|x-y\|),\]
 i.e., $H_{f}$ is Lipschitz continuous on $B(0,r)$ with Lipschitz
constant $K$. This yields that $s_{f}:B(0,r)\to\mathbb{R}$ is Lipschitz
continuous on $B(0,r)$ with the same Lipschitz constant $K$ (see
\cite{RockWets98}, p. 368).
\end{proof}
The above--mentioned map $H_{f}$ is an essential tool to deal with
useful first order conditions. As a matter of fact it is well--known
that any interior solution $\bar{x}\in WE_{f}(p)\cap\mathrm{int}(B(0,r))$
of problem \eqref{eq:VOP} satisfies the inclusion $0\in H_{f^{p}}(\bar{x})$
or, equivalently, $s_{f^{p}}(\bar{x})=0$. It is easy to see that
\[
H_{f^{p}}(x)=H_{f}(x)-\left\{ p\right\} ,\]
 hence $0\in H_{f^{p}}(x)$ if and only if $p\in H_{f}(x)$. Therefore,
the following inclusion holds:\[
WE_{f}(p)\cap\mathrm{int}(B(0,r))\subset H_{f}^{-1}(p).\]

If $f$ is an $\mathbb{R}_{+}^{m}$--convex function on $B(0,r)$,
i.e., all its components $f_{i},$ $i=1,2,...,m,$ are convex functions
on $B(0,r),$ and $\overline{x}\in\mathrm{int}(B(0,r))$, then $s_{f_{p}}(\overline{x})=0,$
and hence $0\in H_{f^{p}}$ if and only if $\overline{x}\in WE_{f}(p)$.
Therefore $\overline{x}\in WE_{f}(p)\cap\mathrm{int}(B(0,L))$ if
and only if $\overline{x}\in H_{f}^{-1}(p).$ Moreover, if $WE_{f}(p)\subset\mathrm{int}(B(0,r)),$
the following characterization of the weakly efficient solution map
holds: \begin{equation}
WE_{f}(p)=H_{f}^{-1}(p).\label{eq:4}\end{equation}

\subsection{Condition number for scalar optimization problems}

For the convenience of the reader we repeat some relevant material
from \cite{Zol03} without proofs, thus making our exposition self--contained.
Let $E$ be a real Banach space and $E^{*}$ its dual, and let $\langle\cdot,\cdot\rangle$
denote the duality pairing. $\mathcal{C}^{1,1}(B(0,L))$ denotes the
class of Fr{é}chet differentiable functions on the closed ball $B(0,L)$
such that the Fr{é}chet derivative $Df$ is a Lipschitz function
on $B(0,L).$ Consider the problem \[
\min_{B(0,L)}f_{p}=\min_{B(0,L)}(f-\langle p,\cdot\rangle).\]
 Under the assumption that the solution is unique, for every small
$p,$ we can set \[
m:\mathcal{U}(0)\subset E^{*}\to E,\quad m(p)=\mathrm{argmin}(B(0,L),f_{p}).\]
 The condition number $\mathrm{cond}(f)$ is the extended real number
defined as follows: \begin{equation}
\mathrm{cond}(f)=\limsup_{p,q\to0,\, p\neq q}\frac{\|m(p)-m(q)\|}{\|p-q\|}\label{c2(I)}\end{equation}

If $m(0)=\bar{x},$ by Proposition \ref{pro Aubin reg} the condition
number $\mathrm{cond}(f)$ agrees with the regularity modulus of the
set--valued map $m_{f}^{-1}$ at $(\bar{x},0),$ i.e., \begin{equation}
\mathrm{cond}(f)=\mathrm{reg}(m^{-1};\bar{x}|0).\label{c2(II)}\end{equation}

The class $\mathcal{C}^{1,1}(B(0,r))$ can be endowed with the pseudodistance
\begin{equation}
d_{Z}(f_{1},f_{2})=\sup\left\{ \frac{\|Df_{1}(x)-Df_{2}(x)-Df_{1}(x')+Df_{2}(x')\|}{\|x-x'\|}\right\} \label{distanza zolezzi}\end{equation}
 where $x,x'\in B(0,L),$ $x\neq x'.$

Denote by $T_{1}$ the class of functions $f\in\mathcal{C}^{1,1}(B(0,L))$
such that
\begin{itemize}
\item $\mathrm{argmin}(B(0,L),f_{p})\neq\emptyset$ for small $p;$
\item $\mathrm{argmin}(B(0,L),f)=\{0\};$
\item $p\mapsto\mathrm{argmin}(B(0,L),f_{p})$ is upper semicontinuous at
$p=0.$
\end{itemize}
The next class $W_{1}$ can be thought of as the class of \lq\lq
good\textquotedbl{} functions, giving rise to well--conditioned problems:
$f\in W_{1}$ if
\begin{itemize}
\item $\mathrm{argmin}(B(0,L),f_{p})$ is a singleton for $p$ small;
\item $\mathrm{cond}(f)<+\infty.$
\end{itemize}
The ill--conditioned functions $I_{1}=\{g\in T_{1}:g\notin W_{1}\}$
satisfy the following result:
\begin{thm}
(see \cite{Zol03}, Theorem 3.1). Let $f\in T_{1}\cap W_{1}$ with
$Df$ one--to--one near $0.$ Then \[
d_{Z}(f,I_{1})\ge\frac{1}{\mathrm{cond}(f)}.\]

\end{thm}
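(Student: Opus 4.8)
The plan is to recast the statement as a lower bound on the radius of metric regularity of the derivative map $Df$ and then read off the conclusion from Proposition~\ref{radius}. The first step is the identification $m^{-1}=Df$. Since each tilted function $f_p=f-\langle p,\cdot\rangle$ is convex, an interior point of $B(0,L)$ minimizes $f_p$ precisely when $Df_p$ vanishes there, i.e. when $Df(x)=p$; the first--order condition is therefore both necessary and sufficient, so $x=m(p)$ if and only if $Df(x)=p$. Uniqueness of the minimizer for small $p$ then gives $m=(Df)^{-1}$, hence $m^{-1}=Df$ as set--valued maps near $0$. Because $f\in T_1$ forces $m(0)=0$, the relevant base point is $(\bar{x},\bar{y})=(0,0)$, and \eqref{c2(II)} becomes $\mathrm{cond}(f)=\mathrm{reg}(Df;0|0)$. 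As $Df$ is continuous (indeed Lipschitz), its graph is closed, so Proposition~\ref{radius} applies and yields
\[
\mathrm{rad}(Df;0|0)=\frac{1}{\mathrm{reg}(Df;0|0)}=\frac{1}{\mathrm{cond}(f)}.
\]

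The second step is to observe that the pseudodistance $d_Z$ measures exactly the Lipschitz size of the perturbation induced on the derivative map. For $g\in\mathcal{C}^{1,1}(B(0,L))$ set $G=Dg-Df$; then by \eqref{distanza zolezzi} one has $d_Z(f,g)=\mathrm{lip}(G)$, the Lipschitz modulus of $G$ on $B(0,L)$. Moreover $G(0)=Dg(0)-Df(0)=0$, since both $f$ and $g$ lie in $T_1$ and are therefore minimized at $0$ for $p=0$; consequently the perturbed map $Df+G=Dg$ is considered at the same base point, $0$ for $0$. Thus replacing $f$ by $g$ amounts to perturbing $Df$ by the Lipschitz map $G$, and the distance theorem is a lower bound on $\mathrm{lip}(G)$ for the ill--conditioned choices of $g$.

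I would argue by contradiction, assuming some $g\in I_1$ satisfies $d_Z(f,g)<1/\mathrm{cond}(f)=\mathrm{rad}(Df;0|0)$. Then $\mathrm{lip}(G)<\mathrm{rad}(Df;0|0)$, so by the very definition of the radius the map $Df+G=Dg$ remains metrically regular at $0$ for $0$; in particular $\mathrm{cond}(g)=\mathrm{reg}(Dg;0|0)<+\infty$. It remains to show that $\mathrm{argmin}(B(0,L),g_p)$ is a singleton for small $p$. Since $f\in W_1$ and $Df$ is one--to--one near $0$, the single--valued inverse $m=(Df)^{-1}$ is Lipschitz with constant $\mathrm{cond}(f)$, which is equivalent to the co--Lipschitz bound $\|Df(x)-Df(x')\|\ge\frac{1}{\mathrm{cond}(f)}\|x-x'\|$ near $0$. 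Combining this with $\|G(x)-G(x')\|\le\mathrm{lip}(G)\,\|x-x'\|$ gives, for $x$ and $x'$ near $0$,
\[
\|Dg(x)-Dg(x')\|\ge\Bigl(\tfrac{1}{\mathrm{cond}(f)}-\mathrm{lip}(G)\Bigr)\|x-x'\|>0,
\]
so $Dg$ is one--to--one near $0$ and $m_g=(Dg)^{-1}$ is single--valued there, whence the minimizer of $g_p$ is unique for small $p$. Therefore $g\in W_1$, contradicting $g\in I_1$. Consequently every $g\in I_1$ satisfies $d_Z(f,g)\ge1/\mathrm{cond}(f)$, and taking the infimum over $I_1$ gives the claim.

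The step I expect to be the main obstacle is the singleton conclusion inside the contradiction: metric regularity of $Dg$ does \emph{not} by itself force single--valuedness of its inverse, so the decisive ingredient is the co--Lipschitz estimate obtained from the one--to--oneness of $Df$ together with the quantitative smallness $\mathrm{lip}(G)<1/\mathrm{cond}(f)$. Making this perturbation bound precise in a fixed neighborhood of $0$---and checking that the relevant minimizers stay interior to $B(0,L)$, so that the first--order characterization $m_g=(Dg)^{-1}$ remains in force---is where the careful work lies. One should also note that $G$ ranges only over gradient perturbations, a subclass of all Lipschitz maps; this is harmless here, since restricting the admissible perturbations can only enlarge the relevant infimum, which is precisely the direction needed for the inequality.
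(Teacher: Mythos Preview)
The paper does not supply a proof of this theorem; it is quoted verbatim from \cite{Zol03} as background for the scalar case, so there is no in--paper argument to compare against. Your overall strategy---identify $m^{-1}$ with $Df$ locally, invoke Proposition~\ref{radius}, and then run a perturbation estimate on $Dg=Df+G$---is the natural one and is essentially correct, but one step is not justified as written.

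You assert that ``each tilted function $f_p$ is convex'' and use this to obtain sufficiency of the first--order condition, hence the identification $m=(Df)^{-1}$. Convexity is nowhere assumed in the definition of the classes $T_1$ and $W_1$; only $f\in\mathcal{C}^{1,1}(B(0,L))$ together with the three bullet conditions is required. The identification $m^{-1}=Df$ near $0$ is nonetheless true, but it must be argued from the hypothesis that $Df$ is one--to--one near $0$, exactly as you later do for $g$: if $x$ is close to $0$ and $p=Df(x)$, then $p$ is small (since $Df(0)=0$ and $Df$ is continuous), so by $f\in W_1$ and upper semicontinuity $m(p)$ exists, is interior, and satisfies $Df(m(p))=p$; injectivity of $Df$ then forces $m(p)=x$. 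Conversely, any interior minimizer satisfies the necessary condition $Df(x)=p$. Once this is in place, \eqref{c2(II)} indeed reads $\mathrm{cond}(f)=\mathrm{reg}(Df;0|0)$ and the rest of your argument goes through.

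A small quantitative point: $\mathrm{cond}(f)$ is defined as a $\limsup$, not as a Lipschitz constant on a fixed neighborhood, so the co--Lipschitz inequality should be written with a slack, say $\|Df(x)-Df(x')\|\ge(\mathrm{cond}(f)+\epsilon)^{-1}\|x-x'\|$ on a small enough ball; since $d_Z(f,g)<1/\mathrm{cond}(f)$ is strict, one may pick $\epsilon>0$ with $\mathrm{lip}(G)<(\mathrm{cond}(f)+\epsilon)^{-1}$ and the injectivity of $Dg$ follows as you indicate.
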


\section{Condition number: a pointwise definition }

The definition of condition number (\ref{c2(I)}) in the scalar case,
and its equivalent formulation (\ref{c2(II)}) give rise to different
approaches in the setting of multiobjective optimization. This section
is devoted to the analysis of the pointwise conditioning, that will
extend the scalar approach summarized in the formula for condition
number given in (\ref{c2(II)}). We will focus on a fixed efficient
solution of the multiobjective optimization problem (VO$_{0}$).
\begin{defn}
\label{def: cn puntuale} Let $\bar{x}\in WE_{f}(0).$ The \emph{condition
number of $f$ at $\bar{x}$} is the extended real number \[
c(\bar{x},f)=\mathrm{reg}(WE_{f}^{-1};\bar{x}|0).\]

\end{defn}
By Proposition \ref{pro Aubin reg} the regularity modulus of a set--valued
map coincides with the Lipschitz modulus of its inverse. Hence we
deduce that\[
c(\bar{x},f)=\mathrm{lip}(WE_{f};0|\bar{x}).\]

In force of Proposition \ref{radius} we can use the condition number
$c(\bar{x},f)$ to characterize the radius of metric regularity of
the map $WE_{f}^{-1}$ at $\bar{x}$ for $0$ as follows:\[
\mathrm{rad}(WE_{f}^{-1};\bar{x}|0)=\frac{1}{c(\bar{x},f)}.\]

This result, which holds without any assumption of smoothness on $f,$
allows us to establish a first version of distance theorem: the condition
number bounds the linear continuous perturbations that can be applied
to the inverse of the solution map $WE_{f}$ without loosing metric
regularity. As already remarked in \cite{Zol03}, the distance to
ill--conditioning defined through $\mathrm{rad}(WE_{f}^{-1};\bar{x}|0)$
cannot be considered as a variational notion since the addition of
a linear perturbation to $WE_{f}^{-1}$ does not correspond to an
additive perturbation of the original objective function.

In order to define well--conditioned problems and to use the pointwise
condition number $c(\bar{x},f)$ to provide a lower bound of the distance
from ill--conditioning of a given well--conditioned multiobjective
optimization problem, it is necessary to put some restrictions on
$f.$ Indeed, one of the main difficulties to face when dealing with
vector optimization is to provide conditions that fully characterize
the weakly efficient solutions of the optimization problem. In the
scalar case, Zolezzi considered essentially functions with a unique
global minimum point at $0,$ and such that $Df$ is one--to--one
near $0,$ thereby ensuring that the zeroes of the gradient completely
identifies the solutions of the problem. In this framework, the necessary
first order conditions for extrema play a crucial role. Clearly, strictly
convex functions with a minimum point at $0$ would be suitable.

In the vector--valued case we will restrict our analysis to the class
of $\mathbb{R}_{+}^{m}$--convex functions; the advantage of using
functions of this class lies in the fact that the first order conditions
completely characterize the weakly efficient solutions. In addition,
throughout the sequel of the paper we will assume that \[
WE_{f}(0)\subset\mathrm{int}(B(0,r)).\]
 We underline that, since the set--valued map $WE_{f}$ is upper semicontinuous,
the inclusion above implies that there exists a positive real number
$\delta_{f}$ such that \begin{equation}
WE_{f}(p)\subset{\rm int}(B(0,r))\quad{\rm for}\,{\rm every}\, p\in\mathbb{R}^{n},\,\left\Vert p\right\Vert <\delta_{f}.\label{eq:WE dentro int}\end{equation}

Let us given the following
\begin{defn}
Let $\bar{x}$ be a point in $WE_{f}(0).$ A function $f\in\mathcal{C}^{1,1}(B(0,r))$
belongs to the class $T_{1}(\bar{x})$ if the following conditions
hold:
\begin{itemize}
\item $f$ is $\mathbb{R}_{+}^{m}$--convex;
\item $c(\bar{x},f)>0.$
\end{itemize}
\end{defn}
A condition that entails the positivity of the condition number introduced
in Definition \ref{def: cn puntuale} can be established. The following
proposition extends a result proved in Lemma 3.2 in \cite{Zol03}.
\begin{prop}
\label{pro:positivo} \label{pr:1} Let $\bar{x}\in WE_{f}(0).$ If
there exist $\kappa>0$ and $\{p_{s}\}\in\mathbb{R}^{n}\setminus\{0\},$
$p_{s}\to0,$ such that \begin{equation}
\|p_{s}\|\le\kappa\, d(\bar{x},WE_{f}(p_{s})),\label{ipotesi2}\end{equation}
 then $c(\bar{x},f)>0$. \end{prop}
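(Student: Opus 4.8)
The plan is to argue by contradiction, exploiting the reformulation $c(\bar x,f)=\mathrm{lip}(WE_f;0|\bar x)$ recorded right after Definition \ref{def: cn puntuale} as a consequence of Proposition \ref{pro Aubin reg}. Suppose, contrary to the claim, that $c(\bar x,f)=0$. Since this modulus is by definition the infimum of the constants $k$ for which the Aubin property \eqref{Aubin} of $WE_f$ at $0$ for $\bar x$ holds, I may fix any $k$ with $0<k<1/\kappa$ and be guaranteed neighborhoods $\mathcal U(0)$ of $0$ in the parameter space and $\mathcal V(\bar x)$ of $\bar x$ in the solution space such that
\[
WE_f(p')\cap\mathcal V(\bar x)\subset WE_f(p)+k\|p-p'\|B(0,1)\qquad\forall\,p,p'\in\mathcal U(0).
\]

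Next I would specialize this inclusion to the sequence $\{p_s\}$ supplied by the hypothesis. Because $p_s\to 0$, for $s$ large we have $p_s\in\mathcal U(0)$; choosing $p=p_s$ and $p'=0$, and using that $\bar x\in WE_f(0)\cap\mathcal V(\bar x)$, the inclusion yields $\bar x\in WE_f(p_s)+k\|p_s\|B(0,1)$. Recalling that $WE_f(p_s)$ is nonempty and closed (Section 2), this membership reads precisely as the distance estimate $d(\bar x,WE_f(p_s))\le k\|p_s\|$.

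Finally I would combine this estimate with the standing hypothesis \eqref{ipotesi2}. Chaining the two inequalities gives $\|p_s\|\le\kappa\,d(\bar x,WE_f(p_s))\le\kappa k\|p_s\|$, and dividing through by $\|p_s\|>0$ (legitimate since $p_s\neq 0$) forces $1\le\kappa k<1$, a contradiction. Hence $c(\bar x,f)>0$; in fact the very same computation shows the sharper quantitative bound $c(\bar x,f)\ge 1/\kappa$.

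I expect the only delicate point to be the bookkeeping in applying the Aubin property: one must keep straight that the base point of \eqref{Aubin} lives in the parameter space (here the origin $0$) while the prescribed value lives in the solution space (here $\bar x$), and one must pick the correct orientation of the inclusion, namely $p'=0$ and $p=p_s$, so that the known membership $\bar x\in WE_f(0)$ produces an upper bound on $d(\bar x,WE_f(p_s))$ rather than the reverse. The nonemptiness and closedness of $WE_f(p_s)$ guaranteed in Section 2 is exactly what permits reading the set inclusion as a distance bound.
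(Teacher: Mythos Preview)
Your proof is correct and follows essentially the same route as the paper's. The only cosmetic difference is that you work on the Aubin-property side of the equivalence $c(\bar x,f)=\mathrm{lip}(WE_f;0|\bar x)=\mathrm{reg}(WE_f^{-1};\bar x|0)$ and argue by contradiction, whereas the paper works on the metric-regularity side and uses a direct $\epsilon$-argument; both yield the same quantitative bound $c(\bar x,f)\ge 1/\kappa$.
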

\begin{proof}
From the definition of $c(\bar{x},f),$ for any $\epsilon>0$ there
exist $\mathcal{U}(\overline{x})$ and $\mathcal{V}(0)$ such that
\[
d(x,WE_{f}(p))\le(c(\bar{x},f)+\epsilon)\, d(p,WE_{f}^{-1}(x))\]
 for every $x\in\mathcal{U}(\overline{x})$ and $p\in\mathcal{V}(0).$
Now let us choose $x=\bar{x}$ and $p=p_{s};$ we obtain, for $s$
big enough, \[
d(\bar{x},WE_{f}(p_{s}))\le(c(\bar{x},f)+\epsilon)\,\|p_{s}\|\le(c(\bar{x},f)+\epsilon)\,\kappa\, d(\bar{x},WE_{f}(p_{s})).\]
 From (\ref{ipotesi2}), $d(\bar{x},WE_{f}(p_{s}))\neq0$ and the
inequality above implies \[
(c(\bar{x},f)+\epsilon)\kappa\ge1.\]
 Since this holds for any $\epsilon>0,$ we conclude that $c(\bar{x},f)>0.$ \end{proof}
\begin{rem}
Condition (\ref{ipotesi2}) weakens the assumption of Lipschitz continuity
of the gradient $Df$ at $\bar{x}=0$ required by Zolezzi in \cite{Zol03}.
Indeed, in the scalar case, $Df(0)=0$ since $x=0$ is a global minimizer
of $f$ in $\mathrm{int}(B(0,r)),$ and, thanks to the assumption
of upper semicontinuity of the map $m=WE_{f},$ $WE_{f}(p)$ is also
an internal minimizer of $f_{p}$ for $p$ small enough, implying
that $Df(WE_{f}(p))=p.$ Thus the Lipschitz assumption on the gradient
gives: \[
\|p\|=\|Df(WE_{f}(p))-Df(WE_{f}(0))\|\le\kappa\|WE_{f}(p)\|=\kappa\, d(0,WE_{f}(p)).\]

\end{rem}
Now we introduce a class of functions that will give rise to well--conditioned
problems at $\bar{x}.$
\begin{defn}
If $\bar{x}\in WE_{f}(0),$ a function $f\in\mathcal{C}^{1,1}(B(0,r))$
is said to belong to the class $W_{1}(\bar{x})$ if $c(\bar{x},f)<+\infty.$
\end{defn}
A stronger convexity assumption on the objective function will entail
a finite condition number. We recall that a function $f:B(0,r)\subset\mathbb{R}^{n}\to\mathbb{R}$
is strongly convex if there exists $\alpha>0$ such that \[
f((1-t)x+tx')\le(1-t)f(x)+tf(x')-\alpha(1-t)t\|x-x'\|^{2},\quad\forall x,x'\in B(0,r),\quad\forall t\in[0,1]\]
 (see, e.g., \cite{vial83}). If $f$ is differentiable, strong convexity
is equivalent to the strong monotonicity of $\nabla f$ on $B(0,r),$
i.e., \[
<\nabla f(x)-\nabla f(x'),x-x'>\ge2\alpha\|x-x'\|^{2},\quad\forall x,x'\in B(0,r).\]
 The following proposition holds:
\begin{prop}
\label{c finito} Let $f\in\mathcal{C}^{1,1}(B(0,r))$. If $f_{i}$
is strongly convex, for every $i=1,2,\dots,m,$ then $c(\bar{x},f)<+\infty$,
for every $\bar{x}\in WE_{f}(0).$ \end{prop}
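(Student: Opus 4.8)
The plan is to reduce the statement to a metric regularity property of the map $H_f$ and then to derive that property from a strong--convexity error bound. Since each $f_i$ is strongly convex it is in particular convex, so $f$ is $\mathbb{R}_{+}^{m}$--convex; together with the standing assumption $WE_f(0)\subset\mathrm{int}(B(0,r))$ and \eqref{eq:WE dentro int}, the characterization \eqref{eq:4} holds for all small $p$, whence $WE_f^{-1}$ coincides with $H_f$ near $(\bar{x},0)$ and $c(\bar{x},f)=\mathrm{reg}(WE_f^{-1};\bar{x}|0)=\mathrm{reg}(H_f;\bar{x}|0)$. Thus it suffices to show that $H_f$ is metrically regular at $\bar{x}$ for $0$ with finite modulus. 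Writing $\alpha_i>0$ for the strong convexity modulus of $f_i$ and $\alpha=\min_i\alpha_i$, I would first observe that every scalarization $g_\lambda=\sum_{i=1}^m\lambda_i f_i$, with $\lambda$ in the unit simplex $\Delta=\{\lambda\in\mathbb{R}^m:\lambda_i\ge 0,\ \sum_i\lambda_i=1\}$, is strongly convex on $B(0,r)$ with the same modulus $\alpha$, because $\langle\nabla g_\lambda(x)-\nabla g_\lambda(x'),x-x'\rangle=\sum_i\lambda_i\langle\nabla f_i(x)-\nabla f_i(x'),x-x'\rangle\ge 2\alpha\|x-x'\|^2$.

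The heart of the proof is the error bound $d(x,WE_f(p))\le\frac{1}{2\alpha}\,d(p,H_f(x))$. Given $x\in B(0,r)$ and $p$ small, I would take $w=\sum_i\lambda_i\nabla f_i(x)$ to be the point of the compact convex set $H_f(x)$ nearest to $p$, so that $\|w-p\|=d(p,H_f(x))=s_{f^p}(x)$ (recall $H_{f^p}=H_f-\{p\}$) and $\nabla g_\lambda^p(x)=w-p$, where $g_\lambda^p:=\sum_i\lambda_i f_i^p=g_\lambda-\langle p,\cdot\rangle$. Let $x^*$ denote the unique (by strong convexity) minimizer of $g_\lambda^p$ over $B(0,r)$. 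If $x^*\in\mathrm{int}(B(0,r))$ then $\nabla g_\lambda^p(x^*)=0$, and strong monotonicity combined with the Cauchy--Schwarz inequality gives $\|x-x^*\|\le\frac{1}{2\alpha}\|\nabla g_\lambda^p(x)\|=\frac{1}{2\alpha}\,d(p,H_f(x))$. Since $x^*$ minimizes a convex combination, with nonnegative weights summing to one, of the convex components of $f^p$, it is weakly efficient, i.e. $x^*\in WE_f(p)$; the error bound then follows from $d(x,WE_f(p))\le\|x-x^*\|$.

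The step I expect to be the main obstacle is guaranteeing that $x^*$ stays in $\mathrm{int}(B(0,r))$, since this is what turns the first order condition into the equation $\nabla g_\lambda^p(x^*)=0$ used above. I would handle it by a compactness argument: the map $(\lambda,p)\mapsto x^*_{\lambda,p}=\mathrm{argmin}_{B(0,r)}g_\lambda^p$ is single valued and continuous by strong convexity, and $\{x^*_{\lambda,0}:\lambda\in\Delta\}$ is a compact set of minimizers of positively weighted sums, hence contained in $WE_f(0)\subset\mathrm{int}(B(0,r))$. A compact subset of an open set has positive distance from its boundary, so continuity in $p$ yields $\delta'>0$ with $x^*_{\lambda,p}\in\mathrm{int}(B(0,r))$ for all $\lambda\in\Delta$ whenever $\|p\|<\delta'$. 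Taking $\|p\|<\min(\delta',\delta_f)$, the error bound holds for every $x\in B(0,r)$, which is exactly the metric regularity of $H_f$ at $\bar{x}$ for $0$ with modulus at most $\frac{1}{2\alpha}$. In view of \eqref{eq:4} this yields $c(\bar{x},f)\le\frac{1}{2\alpha}<+\infty$, with a bound independent of the chosen $\bar{x}\in WE_f(0)$.
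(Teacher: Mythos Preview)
Your argument is correct and takes a genuinely different route from the paper. The paper's proof is a one--line reduction: strong convexity of each $f_i$ makes each $\nabla f_i$ strongly monotone, and then Theorem~5.2 of Lee, Kim, Lee and Yen (\emph{Nonlinear Anal.}, 1998) on parametric vector variational inequalities, applied with $F_i(x,p)=\nabla f_i(x)-p$, yields directly that $p\mapsto WE_f(p)$ is Lipschitz continuous, hence $c(\bar x,f)=\mathrm{lip}(WE_f;0|\bar x)<+\infty$ for every $\bar x$. You instead unfold the mechanism by hand: you scalarize with the nearest--point weights $\lambda$, use strong monotonicity of $\nabla g_\lambda$ to obtain the explicit error bound $d(x,WE_f(p))\le\frac{1}{2\alpha}\,d(p,H_f(x))$, and settle interiority of the scalar minimizers by the tube--lemma compactness step on $\Delta\times\{0\}$. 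The black--box citation buys brevity; your approach is self--contained, yields the quantitative estimate $c(\bar x,f)\le\frac{1}{2\alpha}$ uniform in $\bar x\in WE_f(0)$, and in fact establishes the stronger global bound underlying Proposition~\ref{c*finito} as well.
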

\begin{proof}
From the assumptions, the function $\nabla f_{i}$ is strongly monotone
for every $i,$ i.e., there exists $k_{i}>0$ such that \[
\langle\nabla f_{i}(x')-\nabla f_{i}(x),x'-x\rangle\ge k_{i}\|x'-x\|^{2},\quad\forall x',x\in B(0,r),\; i=1,2,\dots,m.\]
 It follows easily that the assumptions of Theorem 5.2 in \cite{LeeKimLeeYen98}
are fulfilled if we take $F_{i}(x,p)=\nabla f_{i}(x)-p,$ thereby
implying that the map $p\mapsto WE_{f}(p)$ is Lipschitz on $B(0,r).$
\end{proof}
It is interesting to notice that the convexity assumptions in the
above proposition cannot be weakened. Indeed, even the strict convexity
of the functions $f_{i}$ is not enough to ensure the finiteness of
${c}(\bar{x},f)$ at a point $\bar{x}\in WE_{f}(0).$
\begin{example}
Let us consider the strictly convex function $f:[-1,1]\to\mathbb{R}^{2}$
defined by $f(x)=(x^{2},x^{4}).$ The unperturbed problem has a unique
weakly efficient solution $\bar{x}=0.$ Let us consider the perturbed
problem \eqref{eq:VOP}, where $p\in\mathbb{R}$ and the objective
function is $f^{p}(x)=(x^{2}-px,x^{4}-px)$. The weakly efficient
solution set $WE_{f}(p)$ is the closed segment $I{(p)}=\left\{ x\in\left[-1,1\right]:p/2\leq x\leq\sqrt[3]{p/4}\right\} $.
Suppose, by contradiction, that $c(0,f)=\mathrm{lip}(WE_{f};0|0)<+\infty;$
this means that there exists $k>0,$ $\mathcal{U}(0)$ and $\mathcal{V}(0)$
such that \[
I(p)\cap\mathcal{V}(0)\subset I(q)+k|p-q|B(0,1),\qquad\forall p,q\in\mathcal{U}(0).\]
 Let $q=0;$ then \[
I(p)\cap\mathcal{V}(0)\subset k|p|B(0,1),\qquad\forall p\in\mathcal{U}(0),\]
 or, equivalently, for $|p|$ small enough, \[
|\sqrt[3]{p/4}|\le k|p|,\qquad\forall p\in\mathcal{U}(0),\]
 which is false.
\end{example}
In the development of a condition number theory for multiobjective
optimization a fundamental issue is the possibility to use the condition
number to establish a lower bound on the distance to ill--conditioning.
How far can we move from a well--conditioned vector function $f\in{W}_{1}(\bar{x})\cap T_{1}(\bar{x})$
to a function $g$ that is {}``close'' according to a suitable distance,
without leaving the class $W_{1}(\bar{x})?$

In order to tackle this problem, we introduce a pseudodistance $d^{*}$
in the space of vector--valued functions $\mathcal{C}^{1,1}(B(0,r))$
as follows: \begin{equation}
d^{*}(f,g)=\underset{\lambda_{i}\ge0,\,\sum_{i}\lambda_{i}=1}{\max}\, d_{Z}\left({\sum_{i}}\lambda_{i}f_{i},{\sum}_{i}\lambda_{i}g_{i}\right),\label{d*}\end{equation}
 where $d_{Z}$ denotes the pseudodistance defined in (\ref{distanza zolezzi})
for the scalar functions.

In the sequel, we will consider the particular class $P_{f}$ of functions
obtained by perturbing each component of $f$ by the same real--valued
function $h:B(0,r)\to\mathbb{R},\, h\in\mathcal{C}^{1,1}(B(0,r))$,
i.e., \begin{equation}
P_{f}=\{g=f+h\mathbf{e},\; h:B(0,r)\to\mathbb{R},\, h\in\mathcal{C}^{1,1}(B(0,r)),\,{\mathbf{e}}=(1,1,\dots,1)\}.\label{Pf}\end{equation}
 If $g\in P_{f},$ the distance $d^{*}(f,g)$ turns out to be the
following: \begin{equation}
d^{*}(f,g)=\mathrm{lip}(\nabla h;B(0,r)).\label{d*(f,g)}\end{equation}
 The next proposition allows us to establish an upper bound on the
regularity modulus of the perturbed function $g.$
\begin{prop}
\label{prop th. 3F.1} Let $f\in T_{1}(\bar{x})\cap W_{1}(\bar{x}),$
where $\bar{x}\in WE_{f}(0),$ and let $g$ be a function in $P_{f},$
$g=f+h\mathbf{e}$ with $h$ such that $\mathrm{lip}(\nabla h;\bar{x})\cdot c(\bar{x},f)<1.$
Then \[
\mathrm{reg}(H_{g};\bar{x}|\nabla h(\bar{x}))\le\frac{c(\bar{x},f)}{1-c(\bar{x},f)\cdot\mathrm{lip}(\nabla h;\bar{x})}.\]
\end{prop}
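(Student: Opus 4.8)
The label attached to this statement already points to the strategy: the inequality is an instance of the perturbation (\emph{sum}) rule for metric regularity, Theorem 3F.1 in \cite{DonRock09}. The plan is to recast the assertion in terms of the map $H_{f}$, whose metric regularity is governed by $c(\bar{x},f)$, and then to perturb $H_{f}$ by the single--valued Lipschitz map $\nabla h$.

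First I would establish the key reduction $c(\bar{x},f)=\mathrm{reg}(H_{f};\bar{x}\,|\,0)$. Since $\bar{x}\in WE_{f}(0)\subset\mathrm{int}(B(0,r))$ and $f$ is $\mathbb{R}_{+}^{m}$--convex, the first order conditions are both necessary and sufficient at interior points, so by the discussion leading to \eqref{eq:4} one has $x\in WE_{f}(p)\iff p\in H_{f}(x)$ for every $x$ in a neighborhood of $\bar{x}$ and every $p$ near $0$ (using \eqref{eq:WE dentro int} to keep the solutions interior). Equivalently, $WE_{f}^{-1}$ and $H_{f}$ coincide on a neighborhood of $(\bar{x},0)$; as the regularity modulus is a purely local quantity, Definition \ref{def: cn puntuale} yields $c(\bar{x},f)=\mathrm{reg}(WE_{f}^{-1};\bar{x}\,|\,0)=\mathrm{reg}(H_{f};\bar{x}\,|\,0)$. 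Because $f\in W_{1}(\bar{x})$, this modulus is finite, so $H_{f}$ is metrically regular at $\bar{x}$ for $0$; recall moreover that $H_{f}$ has closed graph, hence $\mathrm{gph}\,H_{f}$ is locally closed at $(\bar{x},0)$, as required by the theorem.

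Next I would identify the perturbed map. Writing $g=f+h\mathbf{e}$, every component satisfies $\nabla g_{i}=\nabla f_{i}+\nabla h$, and pulling the common term $\nabla h(x)$ out of the convex combination defining $H_{g}$ gives $H_{g}(x)=H_{f}(x)+\{\nabla h(x)\}$ for each $x$; that is, $H_{g}=H_{f}+\nabla h$, a perturbation of $H_{f}$ by the single--valued map $\nabla h:B(0,r)\to\mathbb{R}^{n}$. Since $h\in\mathcal{C}^{1,1}(B(0,r))$, the map $\nabla h$ is Lipschitz near $\bar{x}$ with modulus $\mathrm{lip}(\nabla h;\bar{x})$, and $0\in H_{f}(\bar{x})$ forces $\nabla h(\bar{x})\in H_{g}(\bar{x})$, so $(\bar{x},\nabla h(\bar{x}))\in\mathrm{gph}\,H_{g}$.

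Finally I would invoke Theorem 3F.1 of \cite{DonRock09} with $F=H_{f}$, $\bar{y}=0$, perturbation $\nabla h$, and the constants $\kappa=\mathrm{reg}(H_{f};\bar{x}\,|\,0)=c(\bar{x},f)$ and $\mu=\mathrm{lip}(\nabla h;\bar{x})$: the hypothesis $\kappa\mu<1$ is exactly $c(\bar{x},f)\cdot\mathrm{lip}(\nabla h;\bar{x})<1$. The theorem then guarantees that $H_{f}+\nabla h=H_{g}$ is metrically regular at $\bar{x}$ for $0+\nabla h(\bar{x})=\nabla h(\bar{x})$ with
\[
\mathrm{reg}(H_{g};\bar{x}\,|\,\nabla h(\bar{x}))\le\frac{\kappa}{1-\kappa\mu}=\frac{c(\bar{x},f)}{1-c(\bar{x},f)\cdot\mathrm{lip}(\nabla h;\bar{x})},
\]
which is the claim. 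The only genuinely delicate point is the reduction of the first paragraph, $c(\bar{x},f)=\mathrm{reg}(H_{f};\bar{x}\,|\,0)$, which rests essentially on $\mathbb{R}_{+}^{m}$--convexity (so that stationarity characterizes weak efficiency) together with the interiority assumption $WE_{f}(0)\subset\mathrm{int}(B(0,r))$; once this identification is secured, the conclusion is a direct application of the perturbation theorem.
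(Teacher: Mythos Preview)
Your proposal is correct and follows essentially the same route as the paper: identify $c(\bar{x},f)=\mathrm{reg}(H_{f};\bar{x}\,|\,0)$ via $\mathbb{R}_{+}^{m}$--convexity and interiority, observe that $H_{g}=H_{f}+\nabla h$ with $\mathrm{gph}\,H_{f}$ closed, and apply Theorem~3F.1 of \cite{DonRock09}. Your write-up is somewhat more explicit than the paper's, but the argument is the same.
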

\begin{proof}
Let us consider the set--valued map $H_{f}:B(0,r)\rightrightarrows\mathbb{R}^{n}$
and the function $\nabla h:B(0,r)\rightarrow\mathbb{R}^{n}.$ Since
$\bar{x}\in WE_{f}(0),$ we have that $(\bar{x},0)\in\mathrm{gph}\,(H_{f}),$
where $\mathrm{gph}\,(H_{f})$ is closed. In addition, by the $\mathbb{R}_{+}^{m}$--convexity
of $f,$ it holds \[
c(\bar{x},f)=\mathrm{reg}(WE_{f}^{-1};\bar{x}|0)=\mathrm{reg}(H_{f};\bar{x}|0).\]
 The equality $H_{g}(x)=H_{f}(x)+\nabla h(x)$ holds for every $x\in B(0,r).$
Therefore, by Theorem 3F.1 in \cite{DonRock09}, the assertion is
proved.
\end{proof}
Now we can reformulate the former result as a distance theorem for
the proposed notion of pointwise condition number.
\begin{thm}
\label{cor distance 1} Let $f\in T_{1}(\bar{x})\cap W_{1}(\bar{x}),$
where $\bar{x}\in WE_{f}(0),$ and $g\in P_{f}\cap T_{1}(\bar{x})$
be such that \[
d^{*}(g,f)<\frac{1}{c(\bar{x},f)}.\]
 If $\nabla h(\bar{x})=0,$ then $g\in W_{1}(\bar{x}).$ \end{thm}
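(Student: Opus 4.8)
The goal is to show that $c(\bar{x},g)<+\infty$, which is exactly the membership $g\in W_1(\bar{x})$. The plan is to reduce the statement to Proposition \ref{prop th. 3F.1}, whose upper bound becomes finite precisely under the distance hypothesis. The bridge is the identity $c(\bar{x},g)=\mathrm{reg}(H_g;\bar{x}|0)$, which I would establish exactly as in the proof of Proposition \ref{prop th. 3F.1}: since $g$ is $\mathbb{R}_+^m$--convex (because $g\in T_1(\bar{x})$) and the relevant solutions sit in the interior of $B(0,r)$, the first--order map $H_g^{-1}$ coincides locally with $WE_g$ near $(\bar{x},0)$, so the regularity moduli agree (cf. \eqref{eq:4} and \eqref{eq:WE dentro int}, now applied to $g$).

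The first step is to check that $0$ is the correct base value, i.e. that $\bar{x}\in WE_g(0)$. Here the hypothesis $\nabla h(\bar{x})=0$ enters decisively. Using $H_g(x)=H_f(x)+\nabla h(x)$ (established in the proof of Proposition \ref{prop th. 3F.1}), evaluation at $\bar{x}$ gives $H_g(\bar{x})=H_f(\bar{x})$. Since $\bar{x}\in WE_f(0)$ is interior and $f$ is $\mathbb{R}_+^m$--convex, we have $0\in H_f(\bar{x})$, hence $0\in H_g(\bar{x})$; by the $\mathbb{R}_+^m$--convexity of $g$ this yields $\bar{x}\in WE_g(0)$, so that $c(\bar{x},g)=\mathrm{reg}(H_g;\bar{x}|0)$ is meaningfully defined.

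Next I would translate the distance hypothesis into the scalar quantity controlling Proposition \ref{prop th. 3F.1}. Because $g\in P_f$, formula \eqref{d*(f,g)} gives $d^*(g,f)=\mathrm{lip}(\nabla h;B(0,r))$, and since the pointwise modulus never exceeds the modulus on the whole ball, $\mathrm{lip}(\nabla h;\bar{x})\le\mathrm{lip}(\nabla h;B(0,r))=d^*(g,f)<1/c(\bar{x},f)$. Multiplying by $c(\bar{x},f)$, which is finite and positive since $f\in W_1(\bar{x})\cap T_1(\bar{x})$, yields $c(\bar{x},f)\cdot\mathrm{lip}(\nabla h;\bar{x})<1$, exactly the hypothesis of Proposition \ref{prop th. 3F.1}. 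Applying it, and using $\nabla h(\bar{x})=0$ to rewrite its left--hand side, gives
\[
c(\bar{x},g)=\mathrm{reg}(H_g;\bar{x}|0)\le\frac{c(\bar{x},f)}{1-c(\bar{x},f)\cdot\mathrm{lip}(\nabla h;\bar{x})}<+\infty,
\]
so $g\in W_1(\bar{x})$.

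The delicate point, rather than any computation, is the double role of $\nabla h(\bar{x})=0$: it simultaneously guarantees $\bar{x}\in WE_g(0)$ and aligns the base point $\nabla h(\bar{x})$ appearing in Proposition \ref{prop th. 3F.1} with the value $0$ required to identify $\mathrm{reg}(H_g;\bar{x}|\nabla h(\bar{x}))$ with $c(\bar{x},g)$. Without it, the proposition would bound the regularity of $H_g$ at a base point other than $0$, and the conclusion about the condition number at $\bar{x}$ would not follow. I would also take care that the local identification $c(\bar{x},g)=\mathrm{reg}(H_g;\bar{x}|0)$ is justified by the interiority of $\bar{x}$ together with the upper semicontinuity of $WE_g$, which keep the solutions near $\bar{x}$ interior for $p$ near $0$, so that the first--order characterization applies there.
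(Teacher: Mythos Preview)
Your proof is correct and follows essentially the same route as the paper: apply Proposition~\ref{prop th. 3F.1} after bounding $\mathrm{lip}(\nabla h;\bar{x})\le\mathrm{lip}(\nabla h;B(0,r))=d^*(f,g)<1/c(\bar{x},f)$, then use $\nabla h(\bar{x})=0$ together with the $\mathbb{R}_+^m$--convexity of $g$ to identify $\mathrm{reg}(H_g;\bar{x}|0)$ with $c(\bar{x},g)$. Your write--up is in fact more explicit than the paper's in justifying that $\bar{x}\in WE_g(0)$ and in spelling out the double role of the hypothesis $\nabla h(\bar{x})=0$.
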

\begin{proof}
We see at once that $\mathrm{lip}(\nabla h;x)\le\mathrm{lip}(\nabla h;B(0,r)),$
for every $x\in B(0,r).$ Therefore, by Proposition \ref{prop th. 3F.1}
we get \[
\mathrm{reg}(H_{g};\bar{x}|0)\le\frac{c(\bar{x},f)}{1-c(\bar{x},f)\cdot\mathrm{lip}(\nabla h;\bar{x}))}\leq\frac{c(\bar{x},f)}{1-c(\bar{x},f)\cdot\mathrm{lip}(\nabla h;B(0,r))}.\]
 Since, by assumptions, $g\in T_{1}(\bar{x})$, the condition $0\in H_{g}(\bar{x})$
is equivalent to $\bar{x}\in WE_{g}(0);$ therefore, $\mathrm{reg}(H_{g};\bar{x}|0)=c(\bar{x},g).$
From (\ref{d*(f,g)}), the conclusion follows.
\end{proof}

\section{Condition number: a global definition}

\noindent In the present section we introduce a global definition
of condition number that extends the notion given in (\ref{c2(I)})
for the scalar problem. It turns out to be a measure of the sensitivity
of the whole weakly efficient solution set with respect to the tilt
perturbations on the objective function.
\begin{defn}
\noindent Let $f$ be a function in $\mathcal{C}^{1,1}(B(0,r)),$
and $WE_{f}(p)$ be the set of weakly efficient solutions of problem
(VO$_{p}$). The \emph{condition number} ${c}^{*}(f)$ is defined
as follows: \begin{equation}
{c}^{*}(f)=\limsup_{p,q\to0,\, p\neq q}\dfrac{d_{H}\left(WE_{f}(p),WE_{f}(q)\right)}{\left\Vert p-q\right\Vert }.\label{c*WE}\end{equation}

\end{defn}
\noindent The global notion of condition number $c^{*}(f)$ is consistent
with the pointwise approach to conditioning introduced in the former
section through $c(\bar{x},f).$ Indeed, the next proposition holds:
\begin{prop}
\noindent Let $f\in\mathcal{C}^{1,1}(B(0,r)).$ Then, $c(\bar{x},f)<+\infty$
for every $\bar{x}\in WE_{f}(0)$ if and only if $c^{*}(f)<+\infty.$ \end{prop}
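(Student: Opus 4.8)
The plan is to prove both implications by relating the Aubin modulus $\mathrm{lip}(WE_{f};0|\bar{x})=c(\bar{x},f)$ at a single solution point to the Hausdorff--Lipschitz modulus $c^{*}(f)$ of the whole solution map near $0$. In fact the easy direction will give the sharper comparison $c(\bar{x},f)\le c^{*}(f)$ for every $\bar{x}\in WE_{f}(0)$, and the reverse direction only needs finiteness. Throughout I use that $WE_{f}(0)$ is nonempty, closed and contained in $\mathrm{int}(B(0,r))$, hence compact, and that $WE_{f}$ is upper semicontinuous at $0$.

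For the implication $c^{*}(f)<+\infty\Rightarrow c(\bar{x},f)<+\infty$, I would first unwind the $\limsup$ in (\ref{c*WE}): its finiteness means that for every $\epsilon>0$ there is a neighborhood $\mathcal{W}$ of $0$ with $d_{H}(WE_{f}(p),WE_{f}(q))\le(c^{*}(f)+\epsilon)\|p-q\|$ for all $p,q\in\mathcal{W}$. Fixing any $\bar{x}\in WE_{f}(0)$ and using $e(WE_{f}(p),WE_{f}(q))\le d_{H}(WE_{f}(p),WE_{f}(q))$, the excess inequality gives $WE_{f}(p)\subset WE_{f}(q)+(c^{*}(f)+\epsilon)\|p-q\|B(0,1)$, hence a fortiori $WE_{f}(p)\cap\mathcal{V}(\bar{x})\subset WE_{f}(q)+(c^{*}(f)+\epsilon)\|p-q\|B(0,1)$ for any neighborhood $\mathcal{V}(\bar{x})$. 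This is exactly the Aubin property (\ref{Aubin}) of $WE_{f}$ at $0$ for $\bar{x}$ with constant $c^{*}(f)+\epsilon$, so $c(\bar{x},f)\le c^{*}(f)+\epsilon$; letting $\epsilon\to0$ yields $c(\bar{x},f)\le c^{*}(f)<+\infty$ for every $\bar{x}$. This direction is essentially immediate, since a two--sided Hausdorff estimate trivially restricts to the one--sided, localized Aubin estimate.

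The reverse implication is the substantial one and rests on compactness of $WE_{f}(0)$ together with upper semicontinuity. Assuming $c(\bar{x},f)<+\infty$ for every $\bar{x}\in WE_{f}(0)$, each such point carries an Aubin inequality: there are $k_{\bar{x}}\ge0$ and radii $a_{\bar{x}},b_{\bar{x}}>0$ with $WE_{f}(p)\cap B(\bar{x},a_{\bar{x}})\subset WE_{f}(q)+k_{\bar{x}}\|p-q\|B(0,1)$ for all $p,q\in B(0,b_{\bar{x}})$. The open balls $\{\mathrm{int}\,B(\bar{x},a_{\bar{x}})\}_{\bar{x}\in WE_{f}(0)}$ cover the compact set $WE_{f}(0)$, so I extract a finite subcover indexed by $\bar{x}_{1},\dots,\bar{x}_{N}$ and set $k^{*}=\max_{j}k_{\bar{x}_{j}}$, $b^{*}=\min_{j}b_{\bar{x}_{j}}$ and $\mathcal{O}=\bigcup_{j=1}^{N}\mathrm{int}\,B(\bar{x}_{j},a_{\bar{x}_{j}})\supseteq WE_{f}(0)$. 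Upper semicontinuity of $WE_{f}$ at $0$ furnishes $\rho>0$ with $WE_{f}(p)\subset\mathcal{O}$ whenever $\|p\|<\rho$. Now fix $p,q$ with $\|p\|,\|q\|<\min(b^{*},\rho)$: every $x\in WE_{f}(p)$ lies in some $B(\bar{x}_{j},a_{\bar{x}_{j}})$, and applying the $j$-th Aubin inequality gives $d(x,WE_{f}(q))\le k_{\bar{x}_{j}}\|p-q\|\le k^{*}\|p-q\|$. Taking the supremum over $x$ yields $e(WE_{f}(p),WE_{f}(q))\le k^{*}\|p-q\|$, and by the symmetric argument $e(WE_{f}(q),WE_{f}(p))\le k^{*}\|p-q\|$, whence $d_{H}(WE_{f}(p),WE_{f}(q))\le k^{*}\|p-q\|$; passing to the $\limsup$ in (\ref{c*WE}) gives $c^{*}(f)\le k^{*}<+\infty$.

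The delicate point is the uniformity step in this reverse implication: the local Aubin data $k_{\bar{x}},a_{\bar{x}},b_{\bar{x}}$ depend on $\bar{x}$ and need not be uniform a priori, and an arbitrary point of $WE_{f}(p)$ for small $p$ need not sit in any prescribed neighborhood of a solution. Compactness of $WE_{f}(0)$ resolves the first difficulty by reducing matters to finitely many constants, while the upper semicontinuity of $WE_{f}$ at $0$ resolves the second by forcing $WE_{f}(p)$ into the finite union $\mathcal{O}$ of local neighborhoods for all sufficiently small $p$ --- which is precisely what lets each point of $WE_{f}(p)$ inherit one of the finitely many local Lipschitz estimates.
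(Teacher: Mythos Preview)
Your proof is correct and follows essentially the same route as the paper: the implication $c^{*}(f)<+\infty\Rightarrow c(\bar{x},f)<+\infty$ is dismissed there as ``trivial'' (you spell it out and even obtain the sharper inequality $c(\bar{x},f)\le c^{*}(f)$), while for the substantial direction both arguments extract a finite subcover of the compact set $WE_{f}(0)$ by Aubin neighborhoods, take the maximum of the finitely many local constants, and invoke upper semicontinuity of $WE_{f}$ at $0$ to force $WE_{f}(p)$ into the union of these neighborhoods for small $p$.
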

\begin{proof}
\noindent From the definition of $c(\bar{x},f),$ for every $\bar{x}\in WE_{f}(0)$
there exists $k_{\bar{x}}>0,$ $\mathcal{V}(\bar{x})$ and $\mathcal{U}_{\bar{x}}(0)$
such that \begin{equation}
WE_{f}(p)\cap\mathcal{V}(\bar{x})\subset WE_{f}(q)+k_{\bar{x}}\|p-q\|B(0,r),\label{inclusione aubin}\end{equation}
 for every $p,q\in\mathcal{U}_{\bar{x}}(0).$ The family of sets $\{\mathcal{V}(\bar{x})\}_{\bar{x}\in WE_{f}(0)}$
is an open covering of the compact set $WE_{f}(0),$ therefore there
exist $\{\bar{x}_{i}\}_{i=1}^{k}$ such that $WE_{f}(0)\subset\cup_{i=1}^{k}\mathcal{V}(\bar{x}_{i}).$
Set $\mathcal{U}(0)=\cap_{i=1}^{k}\mathcal{U}_{\bar{x}_{i}}(0),$
and $\bar{k}=\max_{i}k_{\bar{x}_{i}}.$ We have that, for every $i=1,2,\dots,k,$
and for every $p,q\in\mathcal{U}(0),$ \[
WE_{f}(p)\cap\mathcal{V}(\bar{x}_{i})\subset WE_{f}(q)+k_{\bar{x}_{i}}\|p-q\|B(0,r)\subset WE_{f}(q)+\bar{k}\|p-q\|B(0,r).\]
 Taking the union of the l.h.s. for $i=1,2,\dots,k,$ we get \[
WE_{f}(p)\cap(\cup_{i=1}^{k}\mathcal{V}(\bar{x}_{i}))\subset WE_{f}(q)+\bar{k}\|p-q\|B(0,r).\]
 Since the set $\cup_{i=1}^{k}\mathcal{V}(\bar{x}_{i})$ is a neighborhood
of $WE_{f}(0)$ and the map $p\mapsto WE_{f}(p)$ is upper semicontinuous,
if $p$ is small enough we obtain that \[
WE_{f}(p)\cap(\cup_{i=1}^{k}\mathcal{V}(\bar{x}_{i}))=WE_{f}(p),\]
 thereby showing that, for every $p,q\in\mathcal{U}(0),$ \[
WE_{f}(p)\subset WE_{f}(q)+\bar{k}\|p-q\|B(0,r),\]
 i.e., $d_{H}(WE_{f}(p),WE_{f}(q))\le\bar{k}\|p-q\|.$ The converse
is trivial.
\end{proof}
In case of an $\mathbb{R}_{+}^{m}$--convex function $f$, the set
of weakly efficient solutions $WE_{f}(p)$ can be recovered by the
inverse image of $p$ via $H_{f}$. Consequently, the global condition
number can be defined by

\begin{equation}
c^{*}(f)=\limsup_{p,q\to0,\, p\neq q}\frac{d_{H}(H_{f}^{-1}(p),H_{f}^{-1}(q))}{\|p-q\|}.\label{c*H}\end{equation}

The next proposition shows that, for $\mathbb{R}_{+}^{m}$--convex
functions, the condition number is strictly positive. A similar result
holds in the scalar case (see Lemma 3.2 in \cite{Zol03}).
\begin{prop}
\label{Lemma c positivo} Let $f$ be an $\mathbb{R}_{+}^{m}$--convex
function in $\mathcal{C}^{1,1}(B(0,r)).$ Then $c^{*}(f)>0.$\end{prop}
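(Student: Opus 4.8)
The plan is to prove the sharper quantitative estimate $c^*(f)\ge 1/K$, where $K=\max_{i}\mathrm{lip}(\nabla f_i;B(0,r))$ is the common Lipschitz constant furnished by Proposition \ref{Lip H}. Under the standing assumption $WE_f(0)\subset\mathrm{int}(B(0,r))$ the datum $f$ cannot be affine (otherwise $H_f$ would be a constant set and $WE_f(0)$ would be either empty or the whole ball), so $K>0$ and the conclusion $c^*(f)>0$ follows at once. The two structural facts I would rely on are: first, that for an $\mathbb{R}_+^m$--convex $f$ one has $WE_f(p)=H_f^{-1}(p)$ for $p$ small, by \eqref{eq:4}; and second, that applying Proposition \ref{Lip H} to the perturbed datum $f^p$ (whose gradients $\nabla f_i-p$ share the Lipschitz constant $K$) shows that $s_{f^p}(x)=d(0,H_{f^p}(x))=d(p,H_f(x))$ is $K$--Lipschitz on $B(0,r)$ and vanishes exactly on $WE_f(p)$.

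From these facts I would extract a one--sided Lipschitz estimate on the solution map. Fix any $\bar x\in WE_f(0)$. Since $s_{f^p}$ is $K$--Lipschitz and identically zero on $WE_f(p)$, for every $x\in WE_f(p)$ we get $d(p,H_f(\bar x))=s_{f^p}(\bar x)=|s_{f^p}(\bar x)-s_{f^p}(x)|\le K\|\bar x-x\|$; taking the infimum over $x\in WE_f(p)$ gives $d(\bar x,WE_f(p))\ge d(p,H_f(\bar x))/K$. Because $\bar x\in WE_f(0)$, this yields
\[
d_H\big(WE_f(p),WE_f(0)\big)\ge e\big(WE_f(0),WE_f(p)\big)\ge d(\bar x,WE_f(p))\ge \frac{1}{K}\,d\big(p,H_f(\bar x)\big).
\]
It then remains to choose $\bar x$ and a vanishing family of perturbations $p$ that make the right--hand side of order $\|p\|$.

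The selection is as follows. I would take $\bar x$ on the topological boundary of $WE_f(0)$, which is nonempty since $WE_f(0)$ is a nonempty compact proper subset of $\mathbb{R}^n$. At such a point $0\notin\mathrm{int}(H_f(\bar x))$, so the supporting hyperplane theorem applied to the compact convex set $H_f(\bar x)$ gives a unit vector $\nu$ with $\langle\nu,z\rangle\le 0$ for all $z\in H_f(\bar x)$. Setting $p_s=s\nu$ with $s\downarrow 0$, every $z\in H_f(\bar x)$ satisfies $\|p_s-z\|\ge\langle\nu,p_s-z\rangle=s-\langle\nu,z\rangle\ge s$, while $0\in H_f(\bar x)$ gives $\|p_s-0\|=s$; hence $d(p_s,H_f(\bar x))=s=\|p_s\|$. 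Inserting this in the displayed inequality produces $d_H(WE_f(p_s),WE_f(0))\ge\|p_s\|/K$, and taking $q=0$ in \eqref{c*WE} (equivalently \eqref{c*H}) along $p_s\to 0$ yields $c^*(f)\ge 1/K>0$.

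The delicate point is the existence of the boundary point $\bar x\in WE_f(0)$ at which $0$ is not interior to $H_f(\bar x)$, and I would establish it through the contrapositive implication $0\in\mathrm{int}(H_f(\bar x))\Rightarrow\bar x\in\mathrm{int}(WE_f(0))$. If $B(0,\rho)\subset H_f(\bar x)$, the Lipschitz inclusion $H_f(\bar x)\subset H_f(x)+K\|x-\bar x\|B(0,1)$ of Proposition \ref{Lip H} forces $0\in H_f(x)$ for all $x$ with $\|x-\bar x\|<\rho/K$: otherwise, taking a unit separating functional $\nu'$ with $\langle\nu',z\rangle\ge d(0,H_f(x))>0$ on the closed convex set $H_f(x)$, the point $-\rho\nu'\in B(0,\rho)$ would satisfy both $d(-\rho\nu',H_f(x))\ge\rho$ and, through $-\rho\nu'\in B(0,\rho)\subset H_f(x)+K\|x-\bar x\|B(0,1)$, $d(-\rho\nu',H_f(x))\le K\|x-\bar x\|<\rho$, a contradiction. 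By \eqref{eq:4} such nearby points lie in $WE_f(0)$, so $\bar x$ is interior to $WE_f(0)$. This erosion--by--separation argument, powered by the Lipschitz continuity of $H_f$, is the technical heart of the proof; the supporting--normal geometry carried out above is then elementary.
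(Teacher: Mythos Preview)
Your proof is correct and in fact establishes the sharper quantitative bound $c^{*}(f)\ge 1/K$, which the paper does not extract. The paper's argument is by contradiction: assuming $c^{*}(f)=0$, it picks a sequence $x_{s}\notin WE_{f}(0)$ converging to some $x_{0}\in WE_{f}(0)$, takes $p_{s}$ equal to the minimal--norm element of $H_{f}(x_{s})$ (so that $x_{s}\in WE_{f}(p_{s})$ by convexity), and observes that $d(x_{s},WE_{f}(0))/\|p_{s}\|\to 0$ forces $s_{f}(x_{s})/\|x_{s}-x_{s}^{0}\|\to\infty$, contradicting the Lipschitz continuity of $s_{f}$ from Proposition~\ref{Lip H}. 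Your route is instead direct and geometric: you sit at a boundary point $\bar x$ of $WE_{f}(0)$, prove via the Lipschitz inclusion for $H_{f}$ that $0\notin\mathrm{int}\,H_{f}(\bar x)$ there, and then use a supporting hyperplane to $H_{f}(\bar x)$ at $0$ to manufacture perturbations $p_{s}=s\nu$ with $d(p_{s},H_{f}(\bar x))=\|p_{s}\|$; the $K$--Lipschitz property of $s_{f^{p}}$ then yields the bound. Both proofs ultimately rest on Proposition~\ref{Lip H}. The paper's version is shorter because it avoids your auxiliary contrapositive lemma (that $0\in\mathrm{int}\,H_{f}(\bar x)$ forces $\bar x$ interior to $WE_{f}(0)$), while your version is constructive, identifies an explicit destabilising direction, and delivers the explicit lower bound $1/K$.
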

\begin{proof}
By contradiction, let us suppose that ${c}^{*}(f)=0.$ Now we have
that\begin{align*}
0 & =\limsup_{p,q\rightarrow0,\, p\neq q}\dfrac{d_{H}\left(WE_{f}(p)),WE_{f}(q)\right)}{\left\Vert p-q\right\Vert }\\
 & \geq\limsup_{p\rightarrow0}\dfrac{d_{H}\left(WE_{f}(p),WE_{f}(0)\right)}{\left\Vert p\right\Vert }\\
 & \geq\limsup_{p\rightarrow0}\dfrac{e\left(WE_{f}\left(p\right),WE_{f}(0)\right)}{\left\Vert p\right\Vert };\end{align*}
 hence, for every choice of $x_{p}\in WE_{f}(p),$ we have \begin{equation}
\limsup_{p\rightarrow0}\dfrac{d\left(x_{p},WE_{f}(0)\right)}{\left\Vert p\right\Vert }=0=\lim_{p\rightarrow0}\dfrac{d\left(x_{p},WE_{f}(0)\right)}{\left\Vert p\right\Vert }.\label{limite superiore}\end{equation}
 Since the set $WE_{f}(0)$ is closed and contained in the open set
${\rm int}B(0,r)$, we can always choose a sequence $\left\{ x_{s}\right\} \subset{\rm int}B(0,r)\smallsetminus WE_{f}(0)$
and a point $x_{0}\in WE_{f}(0)$ such that $x_{s}\rightarrow x_{0}$.
Let us consider the sequence $\left\{ v_{f}(x_{s})\right\} \subset\mathbb{R}^{n},$
where $-v_{f}(x_{s})$ is the minimal norm element of $H_{f}(x_{s}).$
We remark that $v_{f}(x_{s})\neq0$ by the convexity of $f.$ By the
continuity of $v_{f}$ (see, for instance, \cite{AubCell84}), we
have that $v_{f}(x_{s})\rightarrow v_{f}(x_{0})=0.$

For every $s\in\mathbb{N}\smallsetminus\left\{ 0\right\} $, set \[
p_{s}=-v_{f}(x_{s}),\]
 and consider the function $f_{p_{s}}:B(0,r)\rightarrow\mathbb{R}^{m}$
defined by\[
f_{p_{s}}(x)=f(x)-[p_{s}]\cdot x.\]
 Hence the minimal norm element of the set\[
H_{f_{p_{s}}}(x_{s})=\left\{ \sum_{i=1}^{m}\lambda_{i}\nabla f_{i}(x_{s})+v_{f}(x_{s}):\sum_{i=1}^{m}\lambda_{i}=1,\,\lambda_{i}\geq0,\, i=1,...,m\right\} \]
 is $0.$ Then, by the convexity assumption, we have that $x_{s}\in WE_{f}(p_{s})$.
Choose a point $x_{s}^{0}\in WE_{f}(0)$ such that \[
d\left(x_{s},WE_{f}(0)\right)=\left\Vert x_{s}-x_{s}^{0}\right\Vert .\]
 Hence, by (\ref{limite superiore}), taking $p=p_{s}=-v_{f}(x_{s}),$
we get \begin{equation}
\lim_{p_{s}\rightarrow0}\dfrac{d\left(x_{s},WE_{f}(0)\right)}{\left\Vert p_{s}\right\Vert }=\lim_{s\rightarrow+\infty}\dfrac{\left\Vert x_{s}-x_{s}^{0}\right\Vert }{\left\Vert v_{f}(x_{s})\right\Vert }=0.\label{eq:Lemma c positivo - 5}\end{equation}
From (\ref{eq:Lemma c positivo - 5}) and taking into account that
$s_{f}(x)=\|v_{f}(x)\|,$ we have \begin{equation}
\dfrac{s_{f}(x_{s})}{\left\Vert x_{s}-x_{s}^{0}\right\Vert }=\dfrac{\left\Vert v_{f}\left(x_{s}\right)\right\Vert }{\left\Vert x_{s}-x_{s}^{0}\right\Vert }\longrightarrow+\infty\;{\rm as\;\quad}s\rightarrow+\infty.\label{eq:nolip}\end{equation}
 Since $v_{f}(x_{s}^{0})=0$ for every $s\in\mathbb{N}\smallsetminus\left\{ 0\right\} $,
relation (\ref{eq:nolip}) contradicts the Lipschitz continuity of
$s_{f}(x)$ proved in Proposition \ref{Lip H}. \end{proof}
\begin{rem}
The proposition above holds under weaker assumptions on $f.$ Indeed,
the proof requires only that $f$ is $\mathbb{R}_{+}^{m}$--convex,
in the class $\mathcal{C}^{1},$ and $s_{f}$ is Lipschitz continuous.
\end{rem}
In the sequel of the section we would like to extend to the global
case the distance theorem already considered in Corollary \ref{cor distance 1}.
Let us first introduce the class of functions giving rise to the well--conditioned
problems:
\begin{defn}
Let $f$ be an $\mathbb{R}_{+}^{m}$--convex function in $\mathcal{C}^{1,1}(B(0,r)).$
Then $f$ is said to belong to the class $W_{1}^{*}$ if $c^{*}(f)<+\infty.$
\end{defn}
Likewise the pointwise approach, strong convexity of the objective
functions entails the finiteness of $c^{*}(f)$. Indeed the following
proposition easily follows by Theorem 5.2 in \cite{LeeKimLeeYen98}.
\begin{prop}
\label{c*finito} Let $f\in\mathcal{C}^{1,1}(B(0,r))$. If $f_{i}$
is strongly convex, for every $i=1,2,\dots,m,$ then $c^{*}(f)<+\infty.$
\end{prop}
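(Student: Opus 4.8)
The plan is to exploit the fact that, under strong convexity, the solution map already carries a \emph{global} Lipschitz estimate, from which the finiteness of $c^{*}(f)$ is immediate. The argument essentially reuses the proof of Proposition \ref{c finito}.

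First I would recall that strong convexity of each component $f_{i}$ is equivalent to strong monotonicity of $\nabla f_{i}$ on $B(0,r)$: there exist constants $k_{i}>0$ with $\langle\nabla f_{i}(x')-\nabla f_{i}(x),x'-x\rangle\ge k_{i}\|x'-x\|^{2}$ for all $x,x'\in B(0,r)$. Setting $F_{i}(x,p)=\nabla f_{i}(x)-p$, these inequalities are exactly the hypotheses of Theorem 5.2 in \cite{LeeKimLeeYen98}, whose conclusion is that the weakly efficient solution map $p\mapsto WE_{f}(p)$ is Lipschitz continuous. In particular there is a single constant $k\ge0$ such that
\[
d_{H}(WE_{f}(p),WE_{f}(q))\le k\,\|p-q\|
\]
for all $p,q$ in a neighborhood of the origin.

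With this uniform bound in hand the conclusion follows directly from the definition \eqref{c*WE}: dividing by $\|p-q\|$ and passing to the $\limsup$ as $p,q\to0$ gives $c^{*}(f)\le k<+\infty$, as claimed. An alternative and perhaps cleaner route, since this argument duplicates the proof of Proposition \ref{c finito}, is to invoke the consistency result already established in this section: Proposition \ref{c finito} guarantees $c(\bar{x},f)<+\infty$ for every $\bar{x}\in WE_{f}(0)$, and then the characterization proved above ($c(\bar{x},f)<+\infty$ for every $\bar{x}\in WE_{f}(0)$ if and only if $c^{*}(f)<+\infty$) yields $c^{*}(f)<+\infty$ at once.

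There is no genuine obstacle here, as the substantive work was already carried out in obtaining the Lipschitz continuity of $WE_{f}$. The only point deserving care is that Theorem 5.2 in \cite{LeeKimLeeYen98} must supply a single Lipschitz constant valid uniformly for all $p,q$ near the origin, rather than a merely pointwise Aubin-type estimate, so that the ratio in \eqref{c*WE} stays bounded; this uniformity is precisely what the phrase ``Lipschitz on $B(0,r)$'' in the proof of Proposition \ref{c finito} records.
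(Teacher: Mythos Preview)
Your proposal is correct and follows exactly the approach indicated in the paper, which simply states that the result follows from Theorem~5.2 in \cite{LeeKimLeeYen98} via the strong monotonicity of the gradients; you have merely spelled out the details. The alternative route you mention, through Proposition~\ref{c finito} and the equivalence between pointwise and global finiteness, is also valid and already available from the results proved in the paper.
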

Let us consider a function $f\in{W}_{1}^{*},$ and let $g$ be a function
sufficiently {}``close'' to $f,$ according to the distance $d^{*}$
defined in (\ref{d*}). We will prove that under appropriate assumptions,
the perturbed function $g$ will give rise to a well--conditioned
problem. Moreover, the condition number of the perturbed function
will be bounded from above by the reciprocal of the distance between
$f$ and $g$.

As in the previous section we will consider the class $P_{f}$ of
functions defined in (\ref{Pf}) obtained by perturbing each component
of $f$ with the same real--valued function $h$.

Our approach is based on the study of the fixed points of a suitable
set--valued map. The next theorem collects two known results: one
of them concerns the existence of fixed points for contractions with
closed values, and the other one provides an upper bound that can
be established on the distance between the fixed points of two contractions
with closed values. This theorem will play a key role in the proof
of the main result of this section.

Let us denote by $\mathcal{F}(S)$ the set of the fixed points of
the set--valued map $S:X\rightrightarrows X$, i.e.\[
\mathcal{F}(S)=\left\{ x\in X:\, x\in S(x)\right\} .\]

\begin{thm}
\label{teorema Lim}(see Theorem 5 in \cite{Nadler1969} and Lemma
1 in \cite{Lim1985}) Let $X$ be a complete metric space, and let
$S_{1},S_{2}:X\rightrightarrows X$ be two contractions with constant
$\theta$ and closed values. Then $\mathcal{F}(S_{1})$ and $\mathcal{F}(S_{2})$
are nonempty sets; moreover, \[
d_{H}\left(\mathcal{F}(S_{1}),\mathcal{F}(S_{2})\right)\leq\dfrac{1}{1-\theta}\sup_{x\in X}d_{H}\left(S_{1}(x),S_{2}(x)\right).\]

\end{thm}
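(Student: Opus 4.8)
The plan is to treat the two assertions separately, reconstructing the classical arguments of Nadler and Lim, both of which rest on the same iterative scheme for set--valued contractions. Throughout I use that a contraction with constant $\theta<1$ satisfies $d_H(S(x),S(x'))\le\theta\, d(x,x')$, together with the excess functional $e$ from the preliminaries.

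First I would prove that each $\mathcal{F}(S_i)$ is nonempty (Nadler's theorem). Fix $\gamma$ with $1<\gamma$ and $\gamma\theta<1$, pick any $x_0\in X$ and any $x_1\in S_i(x_0)$. Inductively, given $x_n\in S_i(x_{n-1})$, the contraction property yields
\[
d(x_n,S_i(x_n))\le e(S_i(x_{n-1}),S_i(x_n))\le d_H(S_i(x_{n-1}),S_i(x_n))\le\theta\, d(x_{n-1},x_n),
\]
so I can select $x_{n+1}\in S_i(x_n)$ with $d(x_n,x_{n+1})\le\gamma\theta\, d(x_{n-1},x_n)$ (if instead $d(x_n,S_i(x_n))=0$, then $x_n\in S_i(x_n)$ by closedness and a fixed point is already found). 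With $\lambda:=\gamma\theta<1$ this gives $d(x_n,x_{n+1})\le\lambda^n d(x_0,x_1)$, so $\{x_n\}$ is Cauchy and, by completeness, converges to some $x^{*}$. Passing to the limit in $d(x^{*},S_i(x^{*}))\le d(x^{*},x_{n+1})+d_H(S_i(x_n),S_i(x^{*}))\le d(x^{*},x_{n+1})+\theta\, d(x_n,x^{*})$ shows $d(x^{*},S_i(x^{*}))=0$, and closedness of $S_i(x^{*})$ gives $x^{*}\in S_i(x^{*})$. The same limiting argument also shows that each $\mathcal{F}(S_i)$ is closed.

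For the estimate (Lim's lemma) set $\sigma:=\sup_{x\in X}d_H(S_1(x),S_2(x))$, which we may assume finite. Given $u\in\mathcal{F}(S_1)$ and $\epsilon>0$, I would build a sequence $\{u_n\}$ with $u_0=u$ and $u_{n+1}\in S_2(u_n)$ converging to a fixed point of $S_2$ close to $u$. Since $u\in S_1(u)$, the first step uses $d(u_0,S_2(u_0))\le d_H(S_1(u_0),S_2(u_0))\le\sigma$, while for $n\ge1$ the contraction property gives $d(u_n,S_2(u_n))\le\theta\, d(u_{n-1},u_n)$ exactly as above. Choosing at each step $u_{n+1}\in S_2(u_n)$ that realizes these distances up to a summable error $\delta_n$ (with $\sum_n\delta_n\le(1-\theta)\epsilon$), the displacements $a_n:=d(u_n,u_{n+1})$ satisfy $a_0\le\sigma+\delta_0$ and $a_n\le\theta a_{n-1}+\delta_n$, whence
\[
\sum_{n\ge0}a_n\le\frac{\sigma}{1-\theta}+\frac{1}{1-\theta}\sum_{n\ge0}\delta_n\le\frac{\sigma}{1-\theta}+\epsilon.
\]
Thus $\{u_n\}$ is Cauchy, its limit $v$ lies in $\mathcal{F}(S_2)$ by the same limiting argument, and $d(u,v)\le\frac{\sigma}{1-\theta}+\epsilon$. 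Letting $\epsilon\to0$ gives $d(u,\mathcal{F}(S_2))\le\frac{\sigma}{1-\theta}$; taking the supremum over $u\in\mathcal{F}(S_1)$ and exchanging the roles of $S_1$ and $S_2$ yields $d_H(\mathcal{F}(S_1),\mathcal{F}(S_2))\le\frac{\sigma}{1-\theta}$, as claimed.

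The routine parts are the two geometric--series estimates; the only delicate point is the bookkeeping in the second construction. Because in a general metric space the distance from a point to a closed set need not be attained, each $u_{n+1}$ can only be chosen to \emph{approximately} realize $d(u_n,S_2(u_n))$, and one must distribute the approximation errors $\delta_n$ so that they stay summable and contribute at most $\epsilon$ after being amplified by the factor $1/(1-\theta)$. This is what lets the sharp constant $1/(1-\theta)$ survive the limit: a cruder selection (e.g. the $\gamma$--trick of the first part) would only give the weaker bound $\gamma\sigma/(1-\gamma\theta)$, which one would then have to optimize over $\gamma>1$.
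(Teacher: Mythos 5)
Your argument is correct and is essentially the classical one: the paper gives no proof of this theorem, citing Nadler's Theorem 5 and Lim's Lemma 1 instead, and your reconstruction (the iterative selection $x_{n+1}\in S_i(x_n)$ with geometric decay for existence, and the $\epsilon$--approximate selections with summable errors $\delta_n$ for the sharp $1/(1-\theta)$ bound) is exactly the content of those two references. The one delicate point you flag -- that in a general metric space the distance to $S_2(u_n)$ need not be attained, so the errors must be distributed summably rather than handled by a fixed multiplicative factor $\gamma>1$ -- is indeed the step that preserves the constant $1/(1-\theta)$, and you handle it correctly.
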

The following lemma characterizes, for functions $g\in P_{f},$ the
set--valued map $H_{g}^{-1},$ in terms of the fixed points of a suitable
map.
\begin{lem}
\label{Lemma 0 global } Let $f:B(0,r)\to\mathbb{R}^{m}$ and $h:B(0,r)\to\mathbb{R}$
be differentiable functions on $B(0,r)$. Then \[
H_{g}^{-1}(p)=\mathcal{F}\left(H_{f}^{-1}(p-\nabla h(\cdot))\right),\]
 where $g=f+h\mathbf{e}.$ \end{lem}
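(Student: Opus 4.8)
The plan is to reduce the claimed set equality to a translation identity for the map $H$, followed by a short chain of equivalences unwinding the definitions of inverse image and fixed point. The only structural fact needed is that the componentwise uniform perturbation by $h$ shifts $H_f$ by the single vector $\nabla h(x)$.

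First I would record the algebraic relation between $H_g$ and $H_f$. Since $g=f+h\mathbf{e}$, each component satisfies $g_i=f_i+h$, so $\nabla g_i(x)=\nabla f_i(x)+\nabla h(x)$ for every $i$ and every $x\in B(0,r)$. Taking any convex combination with weights $\lambda_i\ge0$, $\sum_i\lambda_i=1$, the term $\nabla h(x)$ factors out with coefficient $\sum_i\lambda_i=1$, giving
\[
\sum_{i=1}^{m}\lambda_i\nabla g_i(x)=\sum_{i=1}^{m}\lambda_i\nabla f_i(x)+\nabla h(x).
\]
Since the set of admissible weights is the same for $f$ and $g$, this yields $H_g(x)=H_f(x)+\nabla h(x)$, a translation by the single vector $\nabla h(x)$; this is exactly the relation already invoked in the proof of Proposition \ref{prop th. 3F.1}. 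It is here that the uniformity of the perturbation is essential: precisely because every component receives the same $h$, the shift is a single vector that can be pulled out of the convex combination.

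Then I would prove the equality by chasing definitions. For fixed $p$, a point $x\in B(0,r)$ belongs to $H_g^{-1}(p)$ if and only if $p\in H_g(x)$. By the translation identity this is equivalent to $p-\nabla h(x)\in H_f(x)$, which in turn means $x\in H_f^{-1}(p-\nabla h(x))$. Writing $S$ for the set--valued map $x\mapsto H_f^{-1}(p-\nabla h(x))$, the last condition reads $x\in S(x)$, i.e.\ $x\in\mathcal{F}(S)$. All the implications are equivalences, so the two sets coincide and the claim follows.

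I do not expect a genuine obstacle here: the statement is essentially a bookkeeping identity. The only point requiring care is verifying that the shift $\nabla h(x)$ can indeed be extracted from the convex combination, which rests on $\sum_i\lambda_i=1$; were the components of $f$ perturbed by different functions, this step would fail and $H_g$ would no longer be a mere translate of $H_f$. Note also that no metric or regularity structure is needed for this lemma -- only the differentiability of $f$ and $h$ already assumed -- so the contraction machinery of Theorem \ref{teorema Lim} plays no role in its proof and is reserved for the subsequent application.
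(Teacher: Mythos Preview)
Your proof is correct and follows essentially the same approach as the paper's: both rest on the identity $H_g(x)=H_f(x)+\nabla h(x)$ and then unwind the definitions to show $p\in H_g(x)\Leftrightarrow x\in H_f^{-1}(p-\nabla h(x))$. The only cosmetic difference is that the paper argues one inclusion explicitly and declares the other similar, whereas you run the chain of equivalences in both directions at once.
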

\begin{proof}
Let $x\in H_{g}^{-1}(p)$. Then\[
\sum_{i=1}^{m}\lambda_{i}\nabla f_{i}(x)+\nabla h(x)=p,\]
 for some nonnegative $\lambda_{i}$, $i=1,...,m$, such that $\sum_{i=1}^{m}\lambda_{1}=1.$
Therefore $p-\nabla h(x)\in H_{f}(x)$ and hence $x\in H_{f}^{-1}(p-\nabla h(x))$,
i.e. $x\in\mathcal{F}\left(H_{f}^{-1}(p-\nabla h(\cdot))\right).$
In a similar way we can prove that $H_{g}^{-1}(p)\supseteq\mathcal{F}\left(H_{f}^{-1}(p-\nabla h(\cdot))\right).$
\end{proof}
We are now in the position to state our main result.
\begin{thm}
Let $f$ be a function in the class $W_{1}^{*},$ and let $g=f+h\mathbf{e}\in P_{f}$
be such that \end{thm}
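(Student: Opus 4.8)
The plan is to reduce the computation of the perturbed condition number $c^*(g)$ to a comparison between the fixed--point sets of two set--valued contractions, and then to invoke Theorem \ref{teorema Lim}. Throughout I set $\delta=d^*(f,g)$, so that $\delta=\mathrm{lip}(\nabla h;B(0,r))$ by (\ref{d*(f,g)}), and I use the representation (\ref{c*H}) of the global condition number in terms of $H_f^{-1}$, available because $f$ is $\mathbb{R}_+^m$--convex; I likewise assume, as the hypotheses provide, that $g$ is $\mathbb{R}_+^m$--convex, so that $c^*(g)$ too admits the representation (\ref{c*H}) with $H_g^{-1}$ in place of $H_f^{-1}$. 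By Lemma \ref{Lemma 0 global } every perturbed inverse solution map has the fixed--point description
\[
H_g^{-1}(p)=\mathcal{F}(S_p),\qquad S_p(x)=H_f^{-1}(p-\nabla h(x)),
\]
so the task becomes the estimate of $d_H(\mathcal{F}(S_p),\mathcal{F}(S_q))$ for $p,q$ close to $0$.

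First I would turn the finiteness of $c^*(f)$ into a uniform Lipschitz estimate. Fixing $\epsilon>0$, the definition of $c^*(f)$ as a $\limsup$ furnishes a radius $\rho>0$ with
\[
d_H\!\left(H_f^{-1}(a),H_f^{-1}(b)\right)\le(c^*(f)+\epsilon)\,\|a-b\|\qquad\forall\,a,b\in B(0,\rho).
\]
Since by hypothesis $\delta<1/c^*(f)$, I may choose $\epsilon$ small enough that $\theta:=(c^*(f)+\epsilon)\,\delta<1$. Substituting $a=p-\nabla h(x)$ and $b=p-\nabla h(x')$ into this estimate gives the contraction bound
\[
d_H\!\left(S_p(x),S_p(x')\right)\le(c^*(f)+\epsilon)\,\|\nabla h(x)-\nabla h(x')\|\le\theta\,\|x-x'\|,
\]
valid with the \emph{same} constant $\theta$ for every admissible $p$; the values $S_p(x)=H_f^{-1}(\cdot)$ are closed because $H_f$ has closed graph.

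Granting that the $S_p$ are contractions of constant $\theta$ with nonempty closed values on a fixed complete metric space $X$, Theorem \ref{teorema Lim} applied to the pair $S_p,S_q$ yields
\[
d_H\!\left(H_g^{-1}(p),H_g^{-1}(q)\right)\le\frac{1}{1-\theta}\sup_{x\in X}d_H\!\left(S_p(x),S_q(x)\right)\le\frac{c^*(f)+\epsilon}{1-\theta}\,\|p-q\|,
\]
where the last inequality uses that $S_p(x)$ and $S_q(x)$ are values of $H_f^{-1}$ at points differing exactly by $p-q$. Dividing by $\|p-q\|$, passing to the $\limsup$ as $p,q\to0$, and finally letting $\epsilon\to0$ gives
\[
c^*(g)\le\frac{c^*(f)}{1-c^*(f)\,d^*(f,g)}<+\infty,
\]
so that $g\in W_1^*$, which is the desired conclusion.

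The hard part, and the step I would treat most carefully, is the construction of the complete metric space $X$ on which the maps $S_p$ are genuine self--maps with nonempty closed values while their arguments $p-\nabla h(x)$ stay inside the ball $B(0,\rho)$ where the Lipschitz estimate for $H_f^{-1}$ holds. The natural choice is a closed neighborhood of the solution set contained in $\mathrm{int}(B(0,r))$: here the upper semicontinuity of $WE_f$ together with $WE_f(0)\subset\mathrm{int}(B(0,r))$ (relation (\ref{eq:WE dentro int})) confines the values $S_p(x)=WE_f(p-\nabla h(x))$ to a prescribed neighborhood of $WE_f(0)$, provided $p$ and the perturbation are small enough to keep the arguments near the origin. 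Verifying these inclusions --- that $S_p$ maps $X$ into itself and that the arguments remain admissible --- is the genuinely technical point; once it is settled, the contraction estimate and the application of Theorem \ref{teorema Lim} are routine.
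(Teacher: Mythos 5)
Your proposal is correct and follows essentially the same route as the paper's proof: rewrite $H_g^{-1}(p)$ as the fixed--point set of $S_p(x)=H_f^{-1}(p-\nabla h(x))$ via Lemma \ref{Lemma 0 global }, extract a uniform Lipschitz constant near $0$ from the finiteness of $c^*(f)$ (your $c^*(f)+\epsilon$ plays the role of the paper's $K(\gamma)$), verify the contraction property, and apply Theorem \ref{teorema Lim} to obtain $c^*(g)\le c^*(f)/(1-c^*(f)\,d^*(f,g))$. The technical point you flag at the end --- keeping the arguments $p-\nabla h(x)$ in the region where $H_f^{-1}=WE_f$ and the Lipschitz estimate applies --- is exactly what the paper handles with assumption ii) and the radii $\gamma_1,\gamma_2,\gamma_3$.
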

\begin{itemize}
\item [i)] $g$ is $\mathbb{R}_{+}^{m}$--convex and satisfies $WE_g(0)\subset \mathrm{int}(B(0,r));$
\item [ii)] $\max_{x\in B(0,r)}\left\Vert \nabla h(x)\right\Vert <\delta_{f},$
where $\delta_{f}$ is as in (\ref{eq:WE dentro int});
\item [iii)] $d^{*}(f,g)<\frac{1}{c^{*}(f)}.$
\end{itemize}
Then $g\in W_{1}^{*}.$
\begin{proof}
We divide the proof into a sequence of three steps.


\emph{Step 1.} We show that the set--valued map $H_{f}^{-1}(p-\nabla
h(\cdot))$ is a contraction of $B(0,r)$ for every small $p.$ First
of all, the upper semicontinuity of $H_f$ entails that
$H_{f}^{-1}(p-\nabla h(\cdot))$ has closed values. Furthermore,
given a positive real number $\eta$, let us consider the quantity
\begin{equation}\label{kappaeta}
K(\eta)=\sup\left\{
\dfrac{d_{H}\left({WE}_{f}(p),{WE}_{f}(q)\right)}{\left\Vert
p-q\right\Vert }:p\neq q,\,\left\Vert p\right\Vert
<\eta,\,\left\Vert q\right\Vert <\eta\right\} .\end{equation} It is
easy to observe that $\lim_{\eta\rightarrow0}K(\eta)=c^{*}(f)$.
Since, by assumption,\[ c^{*}(f)=\limsup_{p,q\rightarrow0,\, p\neq
q}\dfrac{d_{H}\left(WE_{f}(p),WE_{f}(q)\right)}{\left\Vert
p-q\right\Vert }<+\infty,\]
 there exists a real number $\gamma_{1}>0$ such that $K(\eta)<+\infty$
for every $0<\eta<\gamma_{1}$. By assumption ii) there exists a real
number $\gamma_{2}>0$ such that \[ \left\Vert p-\nabla
h(x)\right\Vert <\delta_{f}\]
 for every $x\in B(0,r)$ and for every $p$ such that $\left\Vert p\right\Vert <\gamma_{2}$.
Therefore, by (\ref{eq:WE dentro int}) and by the convexity
assumptions, the following inequality holds: \[
H_{f}^{-1}\left(p-\nabla h(x)\right)=WE_{f}\left(p-\nabla
h(x)\right)\] for every $p$, $\|p\|< \gamma_2.$ Therefore, from the
equality above, we have
\begin{align}
d_{H}\left(H_{f}^{-1}(p-\nabla h(x)),H_{f}^{-1}(p-\nabla
h(y))\right)&\leq  K(\gamma_{3})\left\Vert \nabla h(x)-\nabla
h(y)\right\Vert\nonumber\\
&\leq  K(\gamma_{3})\mathrm{lip}(\nabla h;B(0,r))\left\Vert
x-y\right\Vert\label{eq:Lemma1}
\end{align}
for every $x,y\in B(0,r)$ and for every $p$ such that $\left\Vert
p\right\Vert <\gamma_{3},$ where $\gamma_{3}=\min\left\{
\gamma_{1},\gamma_{2}\right\} $. 

From the equality $d^{*}(f,g)=\mathrm{lip}(\nabla h;B(0,r))$ and the
assumption iii), there exists a real number $\gamma_{4}>0$ such that
for all $\gamma<\gamma_{4}$ we have\[ K(\gamma_{4})d^{*}(f,g)<1.\]
 By (\ref{eq:Lemma1}), we conclude that the set--valued map $H_{f}^{-1}(p-\nabla h(\cdot))$
is a contraction with constant $K(\gamma)d^{*}(f,g)$ for every $p$
such that $\left\Vert p\right\Vert <\gamma,$ where $\gamma=\min\left\{ \gamma_{3},\gamma_{4}\right\} .$

\emph{Step 2.} From (\ref{kappaeta}), and the since
$c^{*}(f)<+\infty$, for every $p,q,$ $p\neq q$, $\|p\|,
\|q\|<\gamma,$ and for every $x\in B(0,r)$,
\begin{equation}
d_{H}\left(H_{f}^{-1}(p-\nabla h(x)),H_{f}^{-1}(q-\nabla
h(x))\right)\leq
K(\gamma)\|p-q\|.\label{eq:teorema1global}\end{equation} From Step 1
and Theorem \ref{teorema Lim}, we have
\[
d_{H}\left(\mathcal{F}\left(H_{f}^{-1}(q_{1}-\nabla
h(\cdot))\right),\mathcal{F}\left(H_{f}^{-1}(q_{2}-\nabla
h(\cdot))\right)\right)\leq\]
\begin{equation}
\leq\dfrac{1}{1-K(\gamma)d^*(f,g)}\sup_{x\in
B(0,r)}d_{H}\left(H_{f}^{-1}(q_{1}-\nabla
h(x)),H_{f}^{-1}(q_{2}-\nabla h(x))\right)\label{teorema2global}
\end{equation}
 for every $q_{1},q_{2}$ such that $\left\Vert q_{1}\right\Vert ,\left\Vert q_{2}\right\Vert <\gamma.$
By (\ref{eq:teorema1global}) and Lemma \ref{Lemma 0 global }, from
the last inequality we deduce that \[
d_{H}\left(H_{g}^{-1}(q_{1}),H_{g}^{-1}(q_{2})\right)\leq\dfrac{K({\gamma})}{1-K(
{\gamma})d^{*}(f,g)}\left\Vert q_{1}-q_{2}\right\Vert \]
 for every $q_{1},q_{2}$ such that $\left\Vert q_{1}\right\Vert ,\left\Vert q_{2}\right\Vert <{\gamma}.$
 Hence, by the definition of condition number, it follows that
 \[
c^{*}(g)\leq\dfrac{c^{*}(f)}{1-c^{*}(f)d^{*}(f,g)}.\]

\end{proof}

\section{Conclusions}

In this work we proposed two approaches for the notion of conditioning
for a multiobjective optimization problem. To our knowledge in the
recent literature on vector optimization there are no results on this
topic. We limit our investigation to the Euclidean setting, and we
consider essentially multiobjective problems involving differentiable
convex functions. The main obstacle one has to overcome when dealing
with this problem is the lack of conditions that fully characterize
the solution set. This explains our choice to restrict our analysis
to the framework of vector--valued convex functions. Moreover, following
our approach we can prove a distance--type theorem only for a special
class of perturbed functions. As a matter of fact, a relevant problem
is the evaluation of the effect of a more general class of perturbations
on the corresponding solution map.

For these reasons we deem that to deal with condition numbers in multiobjective
optimization taking into account general perturbations is not an easy
task and it may require a completely new approach.

\bigskip{}

\textbf{Acknowledgement }The second and the third authors were partially
supported by the Ministerio de Ciencia e Innovación (Spain) under
project MTM2009-09493.

\end{document}